\documentclass[a4paper,12pt,reqno]{amsart}
\usepackage{amsfonts}
\usepackage{amsmath}
\usepackage{amssymb}
\usepackage[a4paper]{geometry}
\usepackage{mathrsfs}
\usepackage{csquotes}
\usepackage{enumitem}
\usepackage{dirtytalk}
\usepackage{comment}
\usepackage{nccmath}
\usepackage{soul}
\usepackage{breakcites}

\usepackage[colorlinks]{hyperref}
\renewcommand\eqref[1]{(\ref{#1})} 
\numberwithin{equation}{section}
\theoremstyle{plain}
\newtheorem{thm}{Theorem}[section]
\newtheorem{prop}[thm]{Proposition}
\newtheorem{cor}[thm]{Corollary}
\newtheorem{lem}[thm]{Lemma}

\theoremstyle{definition}
\newtheorem{defn}[thm]{Definition}
\newtheorem{rem}[thm]{Remark}
\newtheorem{ex}[thm]{Example}

\newcommand\norm[1]{\left\lVert#1\right\rVert}

\def\e[#1]{{\textrm{e}}^{#1}}

\begin{document}

   \title[Stability of the $L^{p}$-Poincar\'e inequality]
 {Stability of the $L^{p}$-Poincar\'e inequality for the Lebesgue measure and Gaussian probability measure with explicit geometric dependence and applications to spectral gaps}

\author[N. Yessirkegenov]{Nurgissa Yessirkegenov}
\address{
  Nurgissa Yessirkegenov:
  \endgraf
  KIMEP University, Almaty, Kazakhstan
     \endgraf
  {\it E-mail address} {\rm nurgissa.yessirkegenov@gmail.com}
  }

  \author[A. Zhangirbayev]{Amir Zhangirbayev}
\address{
  Amir Zhangirbayev:
 \endgraf
   SDU University, Kaskelen, Kazakhstan
  \endgraf
  and 
  \endgraf
  Institute of Mathematics and Mathematical Modeling, Kazakhstan
   \endgraf
  {\it E-mail address} {\rm
amir.zhangirbayev@gmail.com}
  }

\thanks{This research is funded by the Committee of Science of the Ministry of Science and Higher Education of the Republic of Kazakhstan (Grant No. AP23490970).}

     \keywords{Poincar\'e inequality, stability inequalities, Gaussian Poincar\'e inequality, spectral gap}
     \subjclass[2020]{26D10, 35J60, 60E15}

     \begin{abstract} 
     In this paper, we obtain stability results for the $L^{p}$-Poincar\'e inequality for both Lebesgue measure and Gaussian probability measure (Theorem \ref{thm stability poincare} and Theorem \ref{thm stability gauss}) that involve explicit dependence on the geometry of the domain. As a byproduct, the explicit constant allows us to recover important results of Yu, Zhong \cite{yu1986lower} and Smits \cite{smits1996spectral} (Corollary \ref{cor 3}), related to the fundamental gap conjecture of the Laplacian (resolved by Andrews and Clutterbuck \cite{andrews2011proof}), thereby providing an alternative proof. Moreover, we extend this spectral gap result to the $p$-Laplacian (Corollary \ref{cor 2}). Such gap estimates for the Dirichlet $p$-Laplacian appear to be unavailable, as also observed in \cite{dai2018fundamental}. Our approach relies on properties of the first eigenfunction of the (Gaussian) $p$-Laplacian operator and weighted Poincar\'e inequalities for log-concave measures on convex domains. 
     \end{abstract}
     \maketitle

\section{Introduction}\label{sec intro}

Suppose that there is a functional inequality $A(u) \geq B(u)$ for all $u$ in some space of functions. Let $\mathcal{M}$ denote the space of all functions which achieve an equality. The basic question here is the following: Is the inequality sensitive to small changes? In the functional sense, that means if $A(u) \geq B(u)$ is very close to an equality, does that imply that $u$ is close to the set of optimizers $\mathcal{M}$? The positive answer is usually given in the form of an inequality 
\begin{align}\label{stability general intro}
A(u)-B(u)\geq cd(u, \mathcal{M}),
\end{align}
where $c>0$ and $d(u, \mathcal{M})$ is the appropriate distance function to the set of optimizers $\mathcal{M}$.

This question was first raised by Brezis and Lieb in \cite{brezis1985sobolev} for the $L^{2}$-Sobolev inequality and later resolved completely by Bianchi and Egnell in \cite{bianchi1991note}. In particular, Bianchi and Egnell obtained that for all $u\in W_{0}^{1,2}(\mathbb{R}^n)$, the following stability inequality holds:
\begin{align*}
\int_{\mathbb{R}^{n}}|\nabla u|^{2} d x-S_{n}\left(\int_{\mathbb{R}^{n}}|u|^{\frac{2 n}{n-2}} d x\right)^{\frac{n-2}{n}} \geq c_{S} \inf _{U \in E_{Sob}} \int_{\mathbb{R}^{n}}|\nabla(u-U)|^{2} d x,
\end{align*}
where $c_{S}>0$, $S_{n}$ is the sharp Sobolev constant and $E_{Sob}$ is the manifold of optimizers of the Sobolev inequality. The literature on the stability analysis of Sobolev inequality is extensive and for the interested reader we refer to \cite{bartsch2003sobolev, cianchi2006quantitative, fusco2007sharp, cianchi2009sharp, figalli2013sharp, fusco2015quantitative, figalli2019gradient, neumayer2020note, figalli2022sharp, dolbeault2025sharp, chen2025optimal}.

Following the contributions of Brezis, Lieb, Bianchi and Egnell, stability results have been investigated for various inequalities, including isoperimetric \cite{fusco2008sharp, figalli2010mass, cicalese2012selection, brasco2012sharp, figalli2018sharp, figalli2022strong}, Brunn-Minkowski \cite{figalli2009refined, figalli2017quantitative, vanhintum2021sharp, vanhintum2019sharp, figalli2023sharp, figalli2024sharp}, Gagliardo-Nirenberg \cite{ruffini2014stability, dolbeault2016stability, nguyen2019sharp, bonforte2025stability, zhang2025stability, chen2025stability}, Hardy \cite{cianchi2008hardy, machihara2013hardy, machihara2015scaling, sano2018scaling, ruzhansky2018extended, ruzhansky2018note, roychowdhury2025critical, banerjee2026sharp}, Heisenberg-Pauli-Weyl \cite{mccurdy2020quantitative, fathi2021short, cazacu2024caffarelli, huang2025sharp, do2024scale, shaimerdenov2025sharp, do2026sharp} and $L^{2}$-Poincar\'e-Wirtinger \cite{lam2026stability} with Gaussian weights. We also refer to \cite{figalli2013stability} for an interesting survey with applications to the long-time asymptotics of
evolution equations. In this work, however, we are specifically focused on the stability of the $L^{p}$-Poincar\'e inequality.

Let $1<p<\infty$ and $\Omega$ be an open bounded domain of $\mathbb{R}^n$. Then, for every $u \in W_{0}^{1,p}(\Omega)$, we have
\begin{align}\label{poincare}
\int_{\Omega}|\nabla u|^{p}dx \geq \lambda_{1}(p,\Omega)\int_{\Omega}|u|^{p}dx,
\end{align}
where $\lambda_{1}(p,\Omega)$ is the first eigenvalue of the Dirichlet $p$-Laplacian. The literature on the stability analysis of the inequality (\ref{poincare}) seems to be very limited. However, there are some works with remainder terms \cite{fleckinger2002improved, bobkov2023improved}. For example, in \cite{fleckinger2002improved}, Fleckinger-Pell{\'e} and Tak{\'a}{\v{c}} obtained an improvement of the inequality (\ref{poincare}): Assume that $\Omega$ is either an interval in $\mathbb{R}^1$ or else a bounded domain in $\mathbb{R}^n$ ($n \ge 2$) whose boundary $\partial\Omega$ is a compact connected $C^2$-manifold. Then there exists a constant $c \equiv c(p, \Omega) > 0$ such that for all $u \in W_0^{1,p}(\Omega)$
\begin{multline}\label{takac ineq}
\int_{\Omega} |\nabla u|^p dx - \lambda_1 \int_{\Omega} |u|^p dx \\\ge c \left( |u^{\parallel}|^{p-2} \int_{\Omega} |\nabla u_1|^{p-2} |\nabla u^{\perp}|^2 dx + \int_{\Omega} |\nabla u^{\perp}|^p dx \right),
\end{multline}
where
\begin{align}\label{orthogonal decom}
u=u^{\parallel}u_{1}+u^{\perp}, \quad u^{\parallel}:=\|u_1\|_{L^2(\Omega)}^{-2} \langle u, u_1 \rangle \in \mathbb{R} \quad \text{and} \quad \langle u^{\perp}, u_1 \rangle = 0
\end{align}
and $\langle\cdot,\cdot\rangle$ is the usual scalar product in $L^{2}$. It is important to note that the result (\ref{takac ineq}), in \cite{fleckinger2002improved}, was then used to obtain an existence result concerning a $(p-1)$-homogeneous problem with Dirichlet $p$-Laplacian.

Then, Bobkov and Kolonitskii \cite{bobkov2023improved} later extended (\ref{takac ineq}) to a Friedrichs inequality with a slightly modified version of the orthogonal decomposition than in (\ref{orthogonal decom}). Furthermore, leveraging the results in \cite{damascelli2004regularity, brasco2023uniqueness}, Bobkov and Kolonitskii were able to relax the regularity assumptions on $\Omega$ and confirm the conjecture proposed in \cite[Section 2.1]{pucci2004strong} concerning the relationship between the hypotheses in \cite{fleckinger2002improved}. At this point, again, we note that the stability analysis was not performed in \cite{bobkov2023improved} and is similar in structure to \cite{fleckinger2002improved} as they then also apply the corresponding result to a similar boundary value problem at resonance as an application of the improved Friedrichs inequality.

In a very recent paper \cite{lam2026stability}, Lam, Lu and Russanov obtained a certain version of the gradient stability of the Poincar\'e-Wirtinger with Gaussian probability measure in $L^{2}(\mathbb{R}^n)$. They subsequently applied this result to derive an improved Poincar\'e-Wirtinger inequality with Gaussian weights, which they then utilized to investigate the stability of the related Heisenberg-Pauli-Weyl (HPW) inequality. These results have also been generalized to monomial weights in the same paper \cite{lam2026stability}. 

The idea of connecting the HPW identity with a suitable Poincar\'e 
inequality to establish stability results for the HPW or Caffarelli-Kohn-Nirenberg inequalities has been successfully employed before (see, e.g. \cite{do2023lp, cazacu2024caffarelli, do2024scale, shaimerdenov2025sharp, do2026sharp}). In this paper, we show how such approach can also be adapted to obtain stability results for the Poincar\'e inequalities. We do this by utilizing some of the properties of the first eigenfunction of the $p$-Laplacian \cite{sakaguchi1987concavity, colesanti2025geometric} and the weighted Poincar\'e inequality for log-concave measures \cite{ferone2012remark}. As a result, we are able to derive the following stability result with explicit constant: Let $p\geq 2$, $\Omega$ be an open bounded convex set having diameter $\mathrm{diam}(\Omega)$ with smooth boundary $\partial \Omega$. Then, for every real-valued $u\in C_{0}^{\infty}(\Omega)$, we have
\begin{align}\label{stability intro}
\int_{\Omega}|\nabla u|^{p}dx-\lambda_{1}(p,\Omega)\int_{\Omega}|u|^{p}dx \geq \frac{1}{2^{p-2}}\left(\frac{\pi_{p}}{\mathrm{diam}(\Omega)}\right)^{p}d(u, E_{Poin})^{p}.
\end{align}
Here, $E_{Poin}=\{cu_{1}: c\in \mathbb{R}\}$ is the manifold of optimizers (or eigenspace) of (\ref{poincare}),
\begin{align*}
\pi_{p}=2 \int_{0}^{+\infty} \frac{1}{1+\frac{1}{p-1} s^{p}} d s=2 \pi \frac{(p-1)^{1 / p}}{p(\sin (\pi / p))}
\end{align*}
and $d(u, E_{Poin})$ is the $L^{p}$ distance to the manifold of optimizers, i.e.
\begin{align*}
d(u, E_{Poin})^{p} := \inf_{c\in\mathbb{R}}\left\{\norm{u-cu_{1}}^{p}_{L^{p}(\Omega)}\right\}.
\end{align*}

The constant, in (\ref{stability intro}), is explicit with clear dependence on the diameter of the convex domain $\Omega$, which we believe is the first such result. In general, in most stability results, the explicit form of the constant that appears nearby the distance function (\ref{stability general intro}) is unknown. Even for the historic result of Bianchi and Egnell \cite{bianchi1991note} for the Sobolev inequality, which was obtained in 1991, no information was known for the constant $c_{S}$ (except an upper bound \cite{konig2023sharp}) until very recently \cite{dolbeault2025sharp}. There, Dolbeault, Esteban, Figalli, Frank and Loss obtained sharp stability results for the Sobolev inequality with optimal dimensional dependence (see also, \cite{chen2024stability, chen2025asymptotically, chen2026optimal}). In our case, the inequality (\ref{stability intro}) provides not only stability for the $L^{p}$-Poincar\'e inequality, but also gives an explicit geometric dependence of the stability constant due to the result of Ferone, Nitsch and Trombetti \cite{ferone2012remark}. Additionally, we show some corresponding results for the Gaussian probability measure using the same logic thanks to \cite{colesanti2025geometric}.

Moreover, after normalizing the eigenfunctions $u_{1}$ and $u_{2}$, we obtain the following spectral gap of the Dirichlet $p$-Laplacian:
\begin{align}\label{gap p intro}
\lambda_{2}(p,\Omega)-\lambda_{1}(p,\Omega) \geq \frac{1}{2^{p-2}}\left(\frac{\pi_{p}}{\mathrm{diam}(\Omega)}\right)^{p}C(p, \Omega, u_{1}, u_{2}),
\end{align}
where
\begin{align*}
C(p, \Omega, u_{1}, u_{2}):=\inf_{c\in \mathbb{R}}\int_{\Omega}|u_{2}-cu_{1}|^{p}dx.
\end{align*}

We could not find any results related to the spectral gaps for the Dirichlet $p$-Laplacian operator. In fact, in \cite[Page 7]{dai2018fundamental}, Dai, Seto and Wei, note that \say{similar gap estimate for the $p$-Laplacian is still unknown}. We believe (\ref{gap p intro}) is the first such estimate.

Specializing to $p=2$ in (\ref{gap p intro}), we get the fundamental gap of the Dirichlet Laplacian in the form of Yu, Zhong and Smits \cite{yu1986lower, smits1996spectral}:
\begin{align*}
\lambda_{2}(2, \Omega)-\lambda_{1}(2, \Omega)\geq \frac{\pi^{2}}{\mathrm{diam}(\Omega)^{2}}.
\end{align*}
The difference between the first two eigenvalues of the Dirichlet Laplacian on a bounded open domain $\Omega$, denoted by $\lambda_{2}(2,\Omega) - \lambda_{1}(2,\Omega)$, is 
commonly known as the fundamental gap of $\Omega$. The fundamental gap problem has an interesting history and we give a brief overview of it here. 

Originally, van den Berg, in \cite{vandenberg1983condensation}, first observed that many convex sets have a spectral gap of $\frac{3\pi^{2}}{\mathrm{diam}(\Omega)^{2}}$. This was also independently observed and then conjectured to be true in any convex domain $\Omega$ by Yau \cite{yau1987nonlinear}, Ashbaugh and Benguria \cite{ashbaugh1989optimal} for the Schr\"odinger operator $-\Delta+V$. The general statement in terms of the Schr\"odinger operator has been resolved in the one-dimensional setting by Lavine \cite{lavine1994eigenvalue} (see, also, \cite{ashbaugh1989optimal, horvath2003first}). In higher dimensions, a breakthrough was made by Singer, Wong, Yau and Yau \cite{singer1985estimate}, where they obtained a lower bound of $\frac{\pi^{2}}{4\mathrm{diam}(\Omega)^{2}}$ by utilizing the gradient estimates in spirit of those from Li \cite{li1979lower}, Li and Yau \cite{li1980estimates} and the log-concavity of the first eigenfunction (established for convex potentials by Brascamp and Lieb \cite{brascamp1976extensions}). Later, Yu, Zhong \cite{yu1986lower} and \cite{smits1996spectral} improved the estimate to $\frac{\pi^{2}}{\mathrm{diam}(\Omega)^{2}}$. Under certain assumptions on the domain $\Omega$, major step forward in proving the conjecture was made by Ba{\~n}uelos and M{\'e}ndez-Hern{\'a}ndez \cite{banuelos2000sharp}, Davis \cite{davis2001spectral} and Ba{\~n}uelos with Kr{\"o}ger \cite{banuelos2001gradient}. Finally, Andrews and Clutterbuck \cite{andrews2011proof} proved the conjecture in 2011 for any convex domain $\Omega$. A few years later, an alternative proof was given by Ni in \cite{ni2013estimates}. 
Gap estimates on other domains are addressed in \cite{cheng1997isoperimetric, oden1999spectral, seto2019sharp, he2020fundamental, dai2021fundamental, sun2025sharp}. One of which is by Sun and Wang \cite{sun2025sharp}, where they showed sharp fundamental gap estimate on convex domains in Gaussian spaces by establishing improved log-concavity properties for the Gaussian heat kernel. In a very recent paper \cite{amato2024geometric}, Amato, Bucur and Fragal{\`a} 
strengthened the result of Andrews and Clutterbuck and quantified it in terms of flatness, thereby answering an additional open problem by Yau \cite{yau1993open}. Additional works related to the fundamental gap problem of the Neumann Laplacian are present in \cite{payne1960optimal, zhong1984estimate}. We also refer to \cite{andrews2014moduli, dai2018fundamental} for comprehensive surveys on the subject.

The paper is organized as follows. In Section \ref{sec prem}, we collect the necessary notation and preliminary results, including the variational characterization of the eigenvalues of the Dirichlet $p$-Laplacian and its Gaussian analogue, the log-concavity of the first eigenfunction, the weighted $L^{p}$-Poincar\'{e} inequality for log-concave measures and the properties of the $C_{p}$-functional together with the associated Picone-type identity. In Section \ref{main results}, we state the main results of the paper: the stability of the $L^{p}$-Poincar\'{e} inequality for the Lebesgue measure (Theorem \ref{thm stability poincare}) and for the Gaussian probability measure (Theorem \ref{thm stability gauss}) along with their consequences on the fundamental gap estimates (Corollaries \ref{cor 2}, \ref{cor 3}, \ref{cor 333}). Section \ref{sec proof prop} is devoted to the proof of Proposition \ref{cp asymp}, establishing the asymptotic behavior of the sharp constant $c_{1}(p)$. Finally, Sections \ref{sec proofs lebesgue} and \ref{sec proofs gaussian} contain the proofs of the main theorems for the Lebesgue and Gaussian settings, respectively.

\section{Preliminaries}\label{sec prem}

In this section, we briefly recall the necessary notation and provide some preliminary results. Let $x\in \mathbb{R}^{n}$ be a point in the $n$-dimensional Euclidean space $\mathbb{R}^n$. Then, $|x|$ denotes the Euclidean norm of $x$ in $\mathbb{R}^n$ with $dx$ denoting the integration with respect to the standard Lebesgue measure.

Let $1<p<\infty$, $\Omega$ be a bounded domain of $\mathbb{R}^n$. Then, a non-zero function $u\in W_{0}^{1,p}(\Omega)$ is called an eigenfunction of the $p$-Laplacian $\Delta_{p}u=\text{div}\left(|\nabla u|^{p-2}\nabla u\right)$ with a corresponding eigenvalue $\lambda \in \mathbb{R}$ if it satisfies the following in the weak sense:
\begin{align}\label{p-laplacian}
\begin{cases} 
-\Delta_{p} u = \lambda|u|^{p-2} u & \text{in } \Omega, \\ 
u = 0 & \text{on } \partial \Omega.
\end{cases}
\end{align}
When $p=2$, the problem (\ref{p-laplacian}) reduces to the eigenvalue problem for the standard Laplace operator. In general, the first eigenvalue can be characterized as
\begin{align}\label{eigenvalue first}
\lambda_{1}(p, \Omega) = \min_{u \in \mathcal{S}_{p}} \int_{\Omega} |\nabla u|^{p} dx,
\end{align}
where
\begin{align*}
\mathcal{S}_{p}:=\left\{u\in W^{1,p}_{0}(\Omega):\norm{u}_{L^{p}(\Omega)}=1\right\}.
\end{align*}
We remark that $\lambda_{1}(p, \Omega)>0$ and is attained by the associated first eigenfunction $u_{1}\in W_{0}^{1,p}(\Omega)$. All first eigenfunctions are known to coincide up to modulo scaling (see, \cite{vazquez1984strong, lindqvist1990equation, belloni2002direct}) and do not change sign in $\Omega$. Therefore, it is possible to assume without the loss of generality that $u_{1}>0$ unless stated otherwise. Furthermore, $u_{1}$ is known to be bounded and in $C_{loc}^{1,\beta}(\Omega)$ for some $\beta\in (0,1)$ (for more details, see, \cite{dibenedetto1983local, otani1988existence, tolksdorf1984regularity}). 

When $p\neq 2$, one can construct several infinite sequences of variational eigenvalues by the Ljusternik-Schnirelman theory with topological indexes such as Krasnosel’skii genus $\gamma(\mathcal{A})$ of a symmetric set $\mathcal{A}\subset W_{0}^{1,p}(\Omega)$ (see, \cite{azorero1987existence}). In particular, let $\Sigma_{k}$ be the collection of all symmetric subsets $\mathcal{A}$ of $\mathcal{S}_{p}$ with $\gamma(\mathcal{A})\geq k$. Then, the numbers
\begin{align}\label{def eigen p}
\lambda_{k}(p,\Omega)=\inf_{\mathcal{A}\in \Sigma_{k}}\max_{u\in\mathcal{A}}\int_{\Omega}|\nabla u|^{p}dx
\end{align}
form a sequence of eigenvalues such that
\begin{align*}
0 < \lambda_1(p, \Omega) < \lambda_2(p, \Omega) \leq \cdots \leq \lambda_k(p, \Omega) \to +\infty \quad \text{as } k \to +\infty.
\end{align*}
When $p=2$, the eigenvalues of (\ref{def eigen p}) coincide with the standard discrete sequence of eigenvalues of the Laplacian (see, \cite{brasco2016stability, attouch2014variational}). Unlike the $p=2$ case, it is unknown whether these are all eigenvalues of the $p$-Laplacian. It is known, however, that there are no eigenvalues between $\lambda_1(p, \Omega)$ and $\lambda_2(p, \Omega)$ with $\lambda_1(p, \Omega)$ being the smallest eigenvalue (see, \cite{anane1996second}). Then, Cuesta, De Figueiredo and Gossez, in \cite{cuesta1999beginning}, derived a variational characterization of $\lambda_{2}(p,\Omega)$ for an open bounded connected domain $\Omega$ (later proved for any open domain with finite measure by Brasco and Franzina \cite{brasco2013hong}):
\begin{align}\label{eigenvalue second}
\lambda_{2}(p,\Omega) = \inf_{\gamma \in \Gamma(u_{1}, -u_{1})} \left[ \max_{u \in \gamma([0,1])} \int_{\Omega} |\nabla u|^{p} dx \right],
\end{align}
where $u_{1}$ is the first eigenfunction such that $\norm{u_{1}}_{L^{p}(\Omega)}=1$ and $\Gamma(u_{1}, -u_{1})$ is the family of all continuous maps from $[0,1]$ to $\mathcal{S}_{p}$ with endpoints $u_{1}$ and $-u_{1}$. We refer also to \cite{drabek1999resonance, perera2003nontrivial, juutinen2005higher, bobkov2017some, audoux2018multiplicity, fusco2019variational, bobkov2025rayleigh, bobkov2025inverse, bobkov2026nodal} for other results in this direction. 

We finish this part by stating an important result of Sakaguchi \cite[Theorem 1]{sakaguchi1987concavity} on the log-concavity of the eigenfunctions of the Dirichlet $p$-Laplacian operator (see, also, \cite{brascamp1976extensions}). 

\begin{thm}[\cite{sakaguchi1987concavity}]\label{thm saka}
Let $\Omega$ be a bounded convex domain in $\mathbb{R}^n \ (n \ge 2)$ with smooth boundary $\partial\Omega$. Fix a number $p > 1$. Let $u \in W_0^{1,p}(\Omega)$ be a positive weak solution to the nonlinear eigenvalue problem
\begin{align*}
\begin{cases} 
-\Delta_{p} u = \lambda|u|^{p-2} u & \text{in } \Omega, \\ 
u = 0 & \text{on } \partial \Omega.
\end{cases}
\end{align*}
Then, $v = \log u$ is a concave function.
\end{thm}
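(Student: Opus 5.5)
Following Sakaguchi, we prove that $v=\log u$ is concave by a deformation (continuity) argument built on the parabolic, or equivalently elliptic, equation satisfied by $v$, combined with a concavity maximum principle in the style of Korevaar and Kennington.

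\emph{Step 1: the equation for $v$.} Write $u=e^{v}$. Then $\nabla u=e^{v}\nabla v$ and $|\nabla u|^{p-2}\nabla u=e^{(p-1)v}|\nabla v|^{p-2}\nabla v$, so the equation becomes
\begin{align*}
\Delta_p v+(p-1)|\nabla v|^{p}+\lambda=0 \quad \text{in } \Omega,
\end{align*}
with $v\to-\infty$ at $\partial\Omega$. By the known regularity, $u\in C^{1,\beta}$ up to the boundary. Since $\Omega$ is smooth and $u>0$, Hopf's lemma for the $p$-Laplacian gives $|\nabla u|>0$ near $\partial\Omega$. The operator is therefore nondegenerate near the boundary, and $u$ is smooth there. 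Because the equation may degenerate where $\nabla u=0$, we first regularize: replace $\Delta_p$ by $\mathrm{div}((\varepsilon+|\nabla u|^2)^{(p-2)/2}\nabla u)$, prove the result for the regularized problems, and pass to the limit $\varepsilon\to0$ using uniform $C^{1,\beta}$ estimates. Concavity is preserved under locally uniform limits.

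\emph{Step 2: concavity near the boundary.} Near $\partial\Omega$, $u$ behaves like $\mathrm{dist}(x,\partial\Omega)$ times a positive smooth function. Hence $v\approx\log\mathrm{dist}$, and $D^2v$ is dominated by $-\nabla u\otimes\nabla u/u^2$, which is negative definite in the normal direction. Convexity of $\partial\Omega$ controls the tangential directions. Together these show that the concavity function
\begin{align*}
\mathcal{C}(x,y,t)=v((1-t)x+ty)-(1-t)v(x)-tv(y)
\end{align*}
is nonnegative whenever $x$ or $y$ lies near $\partial\Omega$.

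\emph{Step 3: interior concavity maximum principle (main obstacle).} Suppose $\mathcal{C}$ attains a negative minimum at an interior triple $(x,y,t)$, and set $z=(1-t)x+ty$. The first-order conditions give $\nabla v(x)=\nabla v(y)=\nabla v(z)$. Because $|\nabla v|$ is then equal at the three points, the nonlinear gradient term $(p-1)|\nabla v|^p$ takes the same value at all three. This is exactly the Kennington structural condition: the equation $F(D^2v,\nabla v)+\lambda=0$ has a zeroth-order part that is constant, hence concave in $v$, and it is harmonic-concave in the appropriate sense. The second-order conditions then allow one to compare $a_{ij}(\nabla v)\partial_{ij}v$ at $z$ with the corresponding convex combination at $x$ and $y$. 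The coefficients $a_{ij}$ agree at the three points since the gradients coincide. This comparison contradicts the equation, so $\mathcal{C}\ge0$.

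The delicate part is making Step 3 rigorous: one must handle strict versus non-strict inequalities. I would first perturb the equation, for instance by replacing $\lambda$ with a slightly concave zeroth-order term $\lambda+\delta e^{-v}$-type modifications or by working on approximating strictly convex domains, to obtain strict inequality. The conclusion then follows by letting the perturbation vanish. In dimension $n\ge2$, smoothness of $\partial\Omega$ is used only in Step 2.
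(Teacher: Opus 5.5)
The paper gives no proof of this statement; it is quoted from Sakaguchi. Your architecture is a standard route: a concavity function for $v=\log u$, the Hopf lemma at $\partial\Omega$, and approximation. But the argument does not close, and the main gap is the degeneracy of $\Delta_p$.

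\textbf{Regularization versus Step 3.} If you regularize $\Delta_p$ acting on $u$, as you propose, then substituting $u=e^{v}$ gives, up to a positive factor, the principal part $\left(\delta_{ij}+(p-2)\frac{e^{2v}v_iv_j}{\varepsilon+e^{2v}|\nabla v|^{2}}\right)v_{ij}$. These coefficients depend on $v$, not only on $\nabla v$. Since $v(x)$, $v(y)$, $v(z)$ differ at the minimizing triple, your key claim in Step 3, that the $a_{ij}$ agree because the gradients agree, is false for the equations you actually work with. Also, the regularized operator is not homogeneous, so you would have to say which positive solutions of the ``regularized eigenvalue problem'' you use and why they converge to $u_1$.

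\textbf{The unregularized equation.} If you keep the exact equation for $v$, Step 3 fails precisely when the common gradient $\xi$ vanishes. Then $x,y,z$ are critical points of $u$, and $a_{ij}(\xi)$ is zero for $p>2$ and undefined for $p<2$. For $p>2$, $v$ is not even $C^2$ there: if it were, the flux $|\nabla v|^{p-2}\nabla v$ would be $C^1$ with divergence $0$ at that point, while the equation demands $-\lambda$. What is missing is an approximation preserving the structure $a_{ij}=a_{ij}(\nabla v)$. One option is to regularize the equation for $v$ itself, solve it with $v\to-\infty$ on $\partial\Omega$, and prove convergence to $\log u$. Another is a viscosity version of the concavity maximum principle that tolerates the degenerate or singular points.

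\textbf{No contradiction in Step 3.} Even where $\xi\neq0$, tracing $D^2v(z)\ge(1-t)D^2v(x)+tD^2v(y)$ against $a_{ij}(\xi)$ only yields $-(p-1)|\xi|^p-\lambda\ge-(p-1)|\xi|^p-\lambda$, which is no contradiction. Write the equation as $a_{ij}(\nabla v)v_{ij}=b(v,\nabla v)$. A negative interior minimum forces $b(v(z),\xi)\ge(1-t)b(v(x),\xi)+tb(v(y),\xi)$ together with $v(z)<(1-t)v(x)+tv(y)$. This is contradictory when $b$ is strictly increasing and convex in $v$. Your perturbation $\lambda\to\lambda+\delta e^{-v}$ gives $b=-(p-1)|\nabla v|^p-\lambda-\delta e^{-v}$, which is increasing but concave, so nothing follows. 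The sign-correct choice is $\lambda\to\lambda-\delta e^{v}$, i.e.\ $-\Delta_pu=\lambda u^{p-1}-\delta u^{p}$ with $\lambda>\lambda_1$, followed by normalization and $\lambda\downarrow\lambda_1$. Passing to strictly convex domains does nothing for this interior issue.

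\textbf{Step 2 (boundary).} This step is asserted, not proved. Negative definiteness of $D^2v$ in a collar of $\partial\Omega$ needs positive curvature of $\partial\Omega$. On a flat piece of a smooth convex boundary $u_{\tau\tau}=0$, and for $\tau\perp\nabla u$ one has $D^2v(\tau,\tau)=u_{\tau\tau}/u$, whose sign the geometry does not control. Moreover, concavity in a collar says nothing about triples with $x$ near $\partial\Omega$ and $y$ deep inside. You need a boundary lemma showing that a negative infimum of $\mathcal{C}$ cannot be approached as a point tends to $\partial\Omega$, using $v\to-\infty$, the Hopf lemma and strict convexity. This should be applied on smooth strictly convex domains approximating $\Omega$, together with convergence of the corresponding first eigenfunctions.
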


On the other hand, we have the Gaussian probability space $(\mathbb{R}^n, \gamma)$ with measure $\gamma$ defined as
\begin{align*}
\gamma(\Omega)=(2\pi)^{-\frac{n}{2}}\int_{\Omega}e^{-\frac{|x|^{2}}{2}}dx
\end{align*}
for any measurable set $\Omega\subseteq \mathbb{R}^n$. In this context, we denote integration with respect to the measure $\gamma$ as $d\gamma$. This measure is explicitly defined by the density $d\gamma(x) = (2\pi)^{-\frac{n}{2}} e^{-\frac{|x|^{2}}{2}} \, dx$. Furthermore, $d\gamma_{\partial \Omega}$ represents the $(n-1)$-dimensional Hausdorff measure on the boundary $\partial \Omega$ with respect to $\gamma$. 

Now let $1<p<\infty$ and $\Omega\subseteq \mathbb{R}^n$ be a measurable set, then the meaning of the notation $L^{p}(\Omega)$, $W^{1,p}(\Omega)$ and $W_{0}^{1,p}(\Omega)$ is same as usual. In the setting of the Gaussian probability measure, we define $L^{p}(\Omega,\gamma)$ as the space of all measurable functions $u$ such that
\begin{align*}
L^{p}(\Omega, \gamma) = \left\{ u : \Omega \to \mathbb{R} : \|u\|_{p, \gamma}^{p} := \int_{\Omega} |u(x)|^{p} d \gamma < +\infty \right\}.
\end{align*}
In the same spirit, we have $W^{1,p}(\Omega, \gamma)$, $W_{0}^{1,p}(\Omega, \gamma)$ as the $\gamma$-weighted Sobolev spaces with $W_{0}^{1,p}(\Omega, \gamma)$ being the closure of $C^{\infty}_{0}(\Omega)$. We note that if $\Omega$ is a bounded set, then the density $e^{-\frac{|x|^{2}}{2}}$ has both positive upper and lower bounds in $\Omega$, thereby implying $L^{p}(\Omega, \gamma)=L^{p}(\Omega)$, $W^{1,p}(\Omega, \gamma)=W^{1,p}(\Omega)$ and etc. Otherwise, when $\Omega$ is unbounded, we have $L^{p}(\Omega)\subset L^{p}(\Omega, \gamma)$ and $W^{1,p}(\Omega)\subset W^{1,p}(\Omega, \gamma)$ (see, e.g., \cite{kilpelainen1994weighted, heinonen2018nonlinear, papageorgiou2019nonlinear}). 

In addition, we define the notion of the Gaussian divergence:
\begin{align*}
\text{div}_{\gamma}X:=e^{\frac{|x|^{2}}{2}}\text{div}\left(e^{-\frac{|x|^{2}}{2}}X\right)=\text{div}X-\langle X, x\rangle
\end{align*}
along with the corresponding product rule:
\begin{align*}
\text{div}_{\gamma}\left(fV\right)=\langle\nabla f, V \rangle+ f\text{div}_{\gamma}V. 
\end{align*}
Let $\Omega$ be a bounded domain of $\mathbb{R}^n$. Similarly as for the Euclidean $p$-Laplacian, we define the Gaussian $p$-Laplacian operator as
\begin{align*}
-\Delta_{p, \gamma} u := -\operatorname{div}(|\nabla u|^{p-2} \nabla u) + \langle x, \nabla u\rangle|\nabla u|^{p-2},
\end{align*}
which is an extension of the Ornstein-Uhlenbeck operator as for $p=2$, we recover its usual form
\begin{align*}
L_{\gamma}u:=-\Delta u + \langle x, \nabla u \rangle.
\end{align*} 
Under appropriate regularity of functions and domains, the integration by parts formula is as follows:
\begin{align*}
\int_{\Omega} v \Delta_{p, \gamma}(u) d \gamma = -\int_{\Omega}|\nabla u|^{p-2}\langle\nabla u , \nabla v\rangle d \gamma + \int_{\partial \Omega} v|\nabla u|^{p-2}\langle\nabla u , n_{x}\rangle d \gamma_{\partial \Omega},
\end{align*}
where $n_{x}$ is the outward unit normal vector at point $x\in \partial \Omega$. We call $u\in W_{0}^{1,p}(\Omega, \gamma)$ an eigenfunction of the $p$-Gaussian operator associated to the eigenvalue $\lambda\in\mathbb{R}$ if it is a weak solution to 
\begin{align}\label{gauss p-lap}
\begin{cases}
-\Delta_{p, \gamma} u = \lambda|u|^{p-2} u & \text { in } \Omega, \\
u = 0 & \text { on } \partial \Omega.
\end{cases}
\end{align}
For the existence of solutions to (\ref{gauss p-lap}) we refer to \cite[Section 6]{franceschi2024cheeger}. Here, we define its first positive eigenvalue 
\begin{align*}
\lambda_{1}(p, \Omega, \gamma) = \min_{u\in\mathcal{S}^{\gamma}_{p}} \int_{\Omega} |\nabla u|^{p} d\gamma,
\end{align*}
where
\begin{align*}
\mathcal{S}_{p}^{\gamma}:=\left\{u\in W_{0}^{1,p}(\Omega,\gamma) : \norm{u}_{L^{p}(\Omega, \gamma)}=1\right\}.
\end{align*}
Just as in the Euclidean case, without the loss of generality, one can take the first eigenfunction $u^{\gamma}_{1}$ of the Gaussian $p$-Laplacian as positive as it does not change sign in $\Omega$ and is unique up to scalar multiplication (for more details, see, \cite{du2021estimates, franceschi2024cheeger}). 

When $p=2$, in (\ref{gauss p-lap}), by standard techniques \cite{evans2010partial} we can define variational eigenvalues for the Ornstein-Uhlenbeck operator by
\begin{align}\label{def eigenvalues gauss}
  \lambda_i(\Omega, \gamma) = \inf_{\substack{E_i \subset S^{\gamma}_{2} \\ E_i: \, i\text{-dim space}}}
  \max_{u \in E_i} R_{\gamma}(u),
\end{align}
where the Rayleigh quotient is given by
\begin{align*}
  R_{\gamma}(u) = \int_{\Omega} |\nabla u|^2 \, d\gamma.
\end{align*}

Finally, we recall a corresponding result of Sakaguchi for the Gaussian $p$-Laplacian operator by Colesanti, Qin, Salani \cite[Theorem 1.5]{colesanti2025geometric}, some regularity properties of the Gaussian eigenfunctions \cite[Proposition 2.1]{colesanti2025geometric}, the weighted $L^{p}$-Poincar\'e inequality for the log-concave probability measures (see, \cite[Section 6.4.3]{mazya2013sobolev} and \cite{ferone2012remark}) as well as sharp bounds on the constant \cite[Theorem 1.1]{ferone2012remark}:

\begin{thm}[\text{\cite[Theorem 1.5]{colesanti2025geometric}}]\label{thm convex gauss}
Let $p > 1$, $n \geq 2$, $\Omega$ be an open, bounded and convex domain in $\mathbb{R}^n$, and $u$ be a solution of problem (\ref{gauss p-lap}), with $u > 0$ in $\Omega$. Then the function
\begin{align*}
W = \log u
\end{align*}
is concave in $\Omega$.
\end{thm}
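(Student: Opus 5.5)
The plan is to adapt the concavity maximum principle used by Sakaguchi in Theorem \ref{thm saka}, in the form of Korevaar and Kennington (see also Alvarez, Lasry and Lions), to the equation satisfied by $W=\log u$. The only new ingredient compared with the Euclidean case is the drift term, and I expect it to be harmless because it is linear in $x$.

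\emph{Equation for $W$.} Writing $u=e^{W}$, we have $\nabla u=e^{W}\nabla W$ and
\begin{align*}
\Delta_p u=e^{(p-1)W}\left(\Delta_p W+(p-1)|\nabla W|^{p}\right).
\end{align*}
Dividing (\ref{gauss p-lap}) by $e^{(p-1)W}$ gives, in $\Omega$,
\begin{align*}
|\nabla W|^{p-2}\Big(\delta_{ij}+(p-2)\frac{W_iW_j}{|\nabla W|^2}\Big)W_{ij}=-\lambda-(p-1)|\nabla W|^{p}+|\nabla W|^{p-2}\langle x,\nabla W\rangle=:b(x,\nabla W).
\end{align*}
Two structural facts matter here. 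The operator on the left is elliptic and depends only on $\nabla W$. The right-hand side does not depend on $W$, and for fixed gradient $\xi$ it is affine in $x$, hence concave in $x$. These are exactly the hypotheses of Kennington's concavity maximum principle.

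\emph{Concavity function.} I would introduce
\begin{align*}
\mathcal C(x,y,t)=W((1-t)x+ty)-(1-t)W(x)-tW(y), \qquad x,y\in\Omega,\ t\in[0,1],
\end{align*}
and aim to show $\mathcal C\ge 0$. Arguing by contradiction, suppose $\mathcal C$ has a negative minimum at some $(\bar x,\bar y,\bar t)$.

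\emph{Excluding the boundary.} Since $W\to-\infty$ at $\partial\Omega$, the minimum cannot escape to $\partial\Omega$, and convexity of $\Omega$ keeps $z=(1-\bar t)\bar x+\bar t\bar y$ inside $\Omega$. To make this rigorous I would use a Hopf-type lemma for the Gaussian $p$-Laplacian, which gives $\partial_\nu u<0$ on $\partial\Omega$ for smooth $\partial\Omega$, hence $|\nabla W|\to\infty$ near the boundary. This is the standard argument, and for a merely convex $\Omega$ I would approximate by smooth, strictly convex domains and pass to the limit using stability of the first eigenpair.

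\emph{Interior minimum.} At an interior minimum, the first-order conditions force $\nabla W(\bar x)=\nabla W(\bar y)=\nabla W(z)=\xi$. The second-order conditions, combined with the ellipticity matrix $A(\xi)$ being the same at all three points, give
\begin{align*}
A(\xi):D^2W(z)\ \ge\ (1-\bar t)\,A(\xi):D^2W(\bar x)+\bar t\,A(\xi):D^2W(\bar y).
\end{align*}
Using the equation, this becomes
\begin{align*}
b(z,\xi)\ge(1-\bar t)\,b(\bar x,\xi)+\bar t\,b(\bar y,\xi).
\end{align*}
Because $b(\cdot,\xi)$ is affine in $x$, this is an equality. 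To get a contradiction I need strictness, and Kennington's device supplies it: replace $W$ by a perturbation such as $W_\varepsilon=W-\varepsilon\phi$ with $\phi$ strictly convex, or require strict concavity of $b$ in the $W$ variable. In the Gaussian case one natural choice is to exploit the term $|\xi|^{p-2}\langle x,\xi\rangle$ together with $\lambda>0$ to produce a strict inequality, then let $\varepsilon\to0$.

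\emph{Main obstacle.} I expect the hardest step to be the degeneracy or singularity of the $p$-Laplacian where $\nabla u=0$ when $p\neq2$, together with the fact that $u$ is only $C^{1,\beta}$, so the pointwise second-order argument above is not directly available. My first attempt would be to regularize. I would replace $|\nabla u|^{p-2}$ by $(\varepsilon+|\nabla u|^2)^{(p-2)/2}$ and solve the corresponding regularized eigenvalue problem, whose positive solution is smooth. I would then run the concavity argument for $\log u_\varepsilon$, where the operator is uniformly elliptic and $b$ keeps its affine dependence on $x$. Finally, I would pass to the limit $\varepsilon\to0$ using $C^{1,\beta}$ estimates and uniqueness of the positive first eigenfunction, since log-concavity is preserved under locally uniform convergence. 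If the regularized structure breaks the required concavity of $b$, the fallback is the viscosity-solution version of the concavity maximum principle of Alvarez, Lasry and Lions, which needs no second derivatives.
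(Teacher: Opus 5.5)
The paper gives no proof of this statement; it is quoted from Colesanti--Qin--Salani. Your outline therefore has to stand on its own, and it does not close at the decisive step.

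At a negative interior minimum you correctly obtain $b(z,\xi)\ge(1-\bar t)\,b(\bar x,\xi)+\bar t\,b(\bar y,\xi)$. Because $b(\cdot,\xi)$ is affine, this is an identity. Sharper second-order information does not rescue it: moving $\bar x,\bar y$ along $\mu\eta,\nu\eta$ with $(1-\bar t)\mu+\bar t\nu=1$ gives $b(z,\xi)\ge(1-\bar t)\mu^2b(\bar x,\xi)+\bar t\nu^2b(\bar y,\xi)$. This only excludes configurations with $b(\bar x,\xi)\neq b(\bar y,\xi)$. The remaining case, $\langle\xi,\bar y-\bar x\rangle=0$ (for $\xi\neq0$), is equivalent to $W(\bar x)=W(\bar y)$ by $\partial_t\mathcal C=0$, and it is not contradictory. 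So everything rests on the ``strictness device'', which you never supply.

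The direction is also reversed. With your normalization $A(\nabla W):D^2W=b$, a contradiction needs $b$ strictly \emph{convex} in $x$, or strictly increasing in $W$; concavity is the wrong target. Since $b$ does not depend on $W$, nothing can be ``required'' there.

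For $p=2$ your perturbation does work, precisely because of the drift. Take $W_\varepsilon=W-\frac{\varepsilon}{2}|x|^2$ and $\zeta=\nabla W_\varepsilon$. Then $\Delta W_\varepsilon=-\lambda-\varepsilon n-|\zeta|^2+(1-2\varepsilon)\langle x,\zeta\rangle+\varepsilon(1-\varepsilon)|x|^2$, which is strictly convex in $x$ and contradicts the second-order inequality; letting $\varepsilon\to0$ gives concavity of $W$.

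For $p\neq2$ this breaks. The principal coefficients are $A(\nabla W)=A(\zeta+\varepsilon x)$. At the minimum only $\nabla W_\varepsilon$ coincides at the three points, so the matrices at $\bar x,\bar y,z$ differ. You can then no longer trace the Hessian inequality against a single positive matrix. The concavity maximum principle needs the principal part to depend only on the gradient of the function being studied.

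The regularization has the same structural defect. Replacing $|\nabla u|^{p-2}$ by $(\varepsilon+|\nabla u|^2)^{(p-2)/2}$ destroys the $(p-1)$-homogeneity that made $\log u$ a good unknown. After $u=e^W$ and division by $e^{(p-1)W}$, the principal coefficients become $(\varepsilon e^{-2W}+|\nabla W|^2)^{(p-2)/2}\bigl(\delta_{ij}+(p-2)\frac{W_iW_j}{\varepsilon e^{-2W}+|\nabla W|^2}\bigr)$. These depend on $W$ itself, which takes different values at $\bar x,\bar y,z$. Moreover, the regularized problem is no longer an eigenvalue problem, since it has no scaling invariance. Existence of a positive ``regularized eigenfunction'' and its convergence to $u_1^\gamma$ would each need their own argument.

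Note that the degeneracy only matters when $\xi=0$. If $\xi\neq0$, the three points are non-critical for $u=e^W$, and Proposition \ref{prop regularity gaussian} already gives $C^2$ regularity there.

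The Alvarez--Lasry--Lions fallback does not remove the difficulty either. It relies on comparing $W$ with its concave envelope, and comparison fails here: $W$ and $W-1$ solve the same equation with the same boundary behaviour.

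What is missing for $p\neq2$ is a genuine source of strictness that keeps the principal part depending on the gradient alone. Alternatively, you would need an approximation by problems that have such strictness, with the drift term under control, and a regularization performed on the $W$-equation rather than the $u$-equation. That is the actual content of the cited theorem.
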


We remark that, in Theorem \ref{thm convex gauss}, Colesanti, Qin and Salani do not require the boundary $\partial \Omega$ to be smooth compared to Theorem \ref{thm saka}. So, it might be possible to remove the smoothness condition for the Euclidean $p$-Laplacian case too.

\begin{prop}[\text{\cite[Proposition 2.1]{colesanti2025geometric}}]\label{prop regularity gaussian}
If $\Omega$ is a bounded open domain and $u$ is a weak solution of problem (\ref{gauss p-lap}), with $p > 1$, then $u \in C^{1, \alpha}_{\text{loc}}(\Omega) \cap C^2(\Omega \setminus \bar{\mathcal{C}})$ for some $\alpha \in (0,1)$, where $\mathcal{C} = \{x \in \Omega : \nabla u(x) = 0\}$. Moreover, if $\partial \Omega$ is $C^{1,\alpha}$, then $u \in C^{1,\beta}(\bar{\Omega})$, for some $\beta \in (0,1)$.
\end{prop}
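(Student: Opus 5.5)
Write the equation in divergence form. Since $e^{-|x|^2/2}$ is smooth and bounded above and below on the bounded set $\Omega$, a weak solution $u\in W^{1,p}_0(\Omega,\gamma)=W^{1,p}_0(\Omega)$ of (\ref{gauss p-lap}) satisfies
\begin{align*}
-\operatorname{div}\bigl(A(x,\nabla u)\bigr)=B(x,u), \qquad A(x,\xi):=e^{-\frac{|x|^{2}}{2}}|\xi|^{p-2}\xi,\quad B(x,s):=\lambda e^{-\frac{|x|^{2}}{2}}|s|^{p-2}s .
\end{align*}
This is a quasilinear equation of $p$-Laplacian type. The field $A$ satisfies the standard growth and ellipticity conditions:
\begin{align*}
\langle\partial_\xi A(x,\xi)\eta,\eta\rangle\ge c|\xi|^{p-2}|\eta|^2,\quad |\partial_\xi A(x,\xi)|\le C|\xi|^{p-2},\quad |\partial_x A(x,\xi)|\le C|\xi|^{p-1},
\end{align*}
with constants depending only on $p$ and $\sup_\Omega|x|$. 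My plan is to reduce the proposition to the classical regularity theory for such equations: DiBenedetto and Tolksdorf in the interior, and Lieberman up to the boundary.

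\textbf{Step 1: an $L^\infty$ bound.} The right-hand side $B(x,u)$ has critical growth $|u|^{p-1}$ with a bounded coefficient. I would test the equation with truncated powers $u_k^{\,\beta}$, where $u_k=\min(|u|,k)$, and run a Moser iteration using the Sobolev inequality on $W^{1,p}_0(\Omega)$. This gives $\|u\|_{L^\infty(\Omega)}\le C\|u\|_{L^p(\Omega)}$. (De Giorgi level-set truncation would serve equally well.) Once $u$ is bounded, $B(x,u)$ is a bounded function, so the equation falls exactly under the hypotheses of Tolksdorf and DiBenedetto, which give $u\in C^{1,\alpha}_{loc}(\Omega)$.

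\textbf{Step 2: boundary regularity.} If $\partial\Omega\in C^{1,\alpha}$, I would apply Lieberman's global $C^{1,\beta}$ theorem for Dirichlet problems with zero boundary data. Its structure conditions are the inequalities above together with a bounded right-hand side, so it yields $u\in C^{1,\beta}(\bar\Omega)$.

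\textbf{Step 3: $C^2$ away from the critical set.} The set $\mathcal C$ is closed in $\Omega$ because $\nabla u$ is continuous. Fix an open $U\Subset\Omega\setminus\bar{\mathcal C}$. There $|\nabla u|\ge\delta>0$, so the equation is uniformly elliptic and nondegenerate on $U$. I would proceed as follows.
\begin{enumerate}
\item Use difference quotients (Nirenberg's method) to get $u\in W^{2,2}_{loc}(U)$.
\item Differentiate the equation: each $\partial_j u$ solves a linear divergence-form equation whose coefficients $\partial_\xi A(x,\nabla u)$ are $C^{0,\alpha}$, and whose lower-order terms involve $\partial_x A$ and $\partial_j B(x,u)$, which are Hölder continuous.
\item Linear Schauder theory then gives $\partial_j u\in C^{1,\alpha}_{loc}(U)$, hence $u\in C^{2,\alpha}(U)\subset C^2(U)$.
\end{enumerate}

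The main obstacle I expect is checking that the weight and the lower-order term do not break the hypotheses of the Tolksdorf and Lieberman theorems. The delicate points are the $x$-dependence of $A$ and the critical growth of $B$. Both are handled by first securing the $L^\infty$ bound of Step 1, and by using the boundedness of $\Omega$ so that every constant depends only on $p$, $\lambda$ and $\operatorname{diam}(\Omega)$.
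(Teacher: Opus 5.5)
The paper does not prove this proposition. It quotes it verbatim from \cite[Proposition 2.1]{colesanti2025geometric}, so there is no internal argument to compare against. Your reconstruction is the standard route to statements of this kind and is correct in outline: an $L^\infty$ bound, then DiBenedetto--Tolksdorf in the interior, Lieberman up to a $C^{1,\alpha}$ boundary, and linear Schauder theory where $|\nabla u|\ge\delta>0$. Two points need repair.

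In Step 1, test with $u\,u_k^{\beta}$ rather than $u_k^{\beta}$. Otherwise the left-hand side has no sign when $u$ changes sign.

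More substantively, in Step 3 your claim that $\partial_j B(x,u)$ is H\"older continuous is false for $1<p<2$. It contains the term $(p-1)\lambda e^{-|x|^2/2}|u|^{p-2}\partial_j u$, which is unbounded near points of $\{u=0\}\cap(\Omega\setminus\bar{\mathcal C})$. Such points exist for sign-changing solutions, e.g.\ higher eigenfunctions.

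The remedy is to avoid differentiating $B$. Test the equation with $\partial_j\varphi$; this shows that $w=\partial_j u$ weakly solves $\operatorname{div}\bigl(\partial_\xi A(x,\nabla u)\nabla w\bigr)=\operatorname{div}F$ on $U$, where $F=-(\partial_{x_j}A)(x,\nabla u)-B(x,u)\,e_j$. This $F$ is H\"older continuous on $U$, because $s\mapsto|s|^{p-2}s$ is $(p-1)$-H\"older for $p<2$. Divergence-form Schauder theory then gives $w\in C^{1,\alpha'}_{loc}(U)$.

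Alternatively, since $u\in W^{2,2}_{loc}(U)$, you can use the nondivergence form $\Delta u+(p-2)\langle D^2u\,\nu,\nu\rangle=\langle x,\nabla u\rangle-\lambda|\nabla u|^{2-p}|u|^{p-2}u$ with $\nu=\nabla u/|\nabla u|$. Its coefficients and right-hand side are H\"older on $U$, so $W^{2,q}$ and Schauder estimates finish the argument.
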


\begin{thm}[\text{\cite[Theorem 1.1]{ferone2012remark}}]\label{thm log-convex stab}
Let $p>1$ and $\omega$ be a positive log-concave function on an open bounded convex set $\Omega$ having the diameter $\mathrm{diam}(\Omega)$. Then, there exists a positive constant $C_{\Omega,p,\omega}$ such that, for every Lipschitz function $u$ with $\int_{\Omega}|u|^{p-2}u\omega dx = 0$, we have
\begin{align*}
\int_{\Omega}\left|\nabla u\right|^{p}\omega dx\geq C_{\Omega, p, \omega}\inf_{t \in \mathbb{R}} \int_{\Omega}\left|u - t\right|^{p}\omega dx.
\end{align*}
Moreover, in any dimension, we have
\begin{align*}
C_{\Omega, p, \omega}\geq\left(\frac{\pi_{p}}{\mathrm{diam}(\Omega)}\right)^{p},
\end{align*}
where
\begin{align*}
\pi_{p}=2 \int_{0}^{+\infty} \frac{1}{1+\frac{1}{p-1} s^{p}} d s=2 \pi \frac{(p-1)^{1 / p}}{p(\sin (\pi / p))}.
\end{align*}
\end{thm}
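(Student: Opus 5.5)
The plan is to follow the classical Payne--Weinberger strategy: reduce to a one-dimensional weighted eigenvalue problem, adapted to the $p$-homogeneous setting.

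\emph{Step 1: reduction to $t=0$.} The function $t\mapsto\int_{\Omega}|u-t|^{p}\omega\,dx$ is strictly convex and differentiable. Its derivative at $t$ is $-p\int_{\Omega}|u-t|^{p-2}(u-t)\omega\,dx$. So the hypothesis $\int_{\Omega}|u|^{p-2}u\,\omega\,dx=0$ says exactly that the infimum is attained at $t=0$. It therefore suffices to prove
\begin{align*}
\int_{\Omega}|\nabla u|^{p}\omega\,dx\geq\left(\frac{\pi_{p}}{\mathrm{diam}(\Omega)}\right)^{p}\int_{\Omega}|u|^{p}\omega\,dx
\end{align*}
for Lipschitz $u$ satisfying this orthogonality condition. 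By density and a standard approximation, we may take $u$ smooth up to the boundary.

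\emph{Step 2: bisection (needle decomposition).} For each unit direction $\nu$ there is a hyperplane orthogonal to $\nu$ that splits $\Omega$ into two convex pieces on each of which $\int|u|^{p-2}u\,\omega=0$. This follows by a continuity/intermediate-value argument, rotating $\nu$ in a $2$-plane and comparing the two halves. Both the Dirichlet-type energy and $\int|u|^{p}\omega$ are additive over the pieces. Hence if the inequality holds on every piece with the same constant, it holds on $\Omega$; note that each piece has diameter $\le\mathrm{diam}(\Omega)$. Iterating, we cut $\Omega$ into convex pieces that are arbitrarily thin in all but one direction. On such a needle of length $L\le\mathrm{diam}(\Omega)$ around a segment parametrized by $s$, let $h(s)$ be the $(n-1)$-dimensional cross-sectional measure. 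By Brunn--Minkowski, $h^{1/(n-1)}$ is concave, so $h$ is log-concave. Since $\omega$ is log-concave, the effective one-dimensional weight $w(s)\approx\omega h(s)$ is log-concave as well. Passing to the limit, $u$ becomes essentially a function of $s$, and the energy is bounded below by $\int|v'|^{p}w\,ds$. This reduces everything to Step 3, up to errors vanishing in the limit.

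\emph{Step 3: the one-dimensional estimate, which is the main obstacle.} On $(0,L)$ with $w$ log-concave, we must show $\int|v'|^{p}w\ge(\pi_{p}/L)^{p}\int|v|^{p}w$ whenever $\int|v|^{p-2}v\,w=0$. First, minimize the Rayleigh quotient under this constraint. By homogeneity the Lagrange multiplier for the constraint vanishes, so a minimizer solves the Neumann problem
\begin{align*}
-(w|v'|^{p-2}v')'=\mu w|v|^{p-2}v,\qquad v'(0)=v'(L)=0,
\end{align*}
and changes sign. Next, introduce the generalized Prüfer variables $v=r\sin_{p}\theta$ and $v'=\mu^{1/p}r\cos_{p}\theta$, where $\sin_{p}$ is the $p$-sine of period $2\pi_{p}$. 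Differentiating gives
\begin{align*}
\theta'=\mu^{1/p}-\frac{1}{p-1}(\log w)'\,\Phi(\theta)
\end{align*}
with $\Phi(\theta)=|\cos_{p}\theta|^{p-2}\cos_{p}\theta\,\sin_{p}\theta$. The Neumann conditions force $\theta(0)\equiv\pi_{p}/2$ and $\theta(L)\equiv\pi_{p}/2 \pmod{\pi_{p}}$, and the sign change forces a total increase of at least $\pi_{p}$.

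The delicate point is controlling the drift term. For this I would use that $(\log w)'$ is nonincreasing, together with a comparison argument in the spirit of Payne--Weinberger and Ferone--Nitsch--Trombetti. Split at a zero of $v$. On each subinterval, compare $\theta$ with the solution of the unweighted equation $\theta'=\mu^{1/p}$, using the monotonicity of $(\log w)'$ to show that the drift can only slow the passage through the relevant quarter-periods. This would yield $\mu^{1/p}L\ge\pi_{p}$, hence $\mu\ge(\pi_{p}/L)^{p}\ge(\pi_{p}/\mathrm{diam}(\Omega))^{p}$.

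If the Prüfer comparison proves awkward, the fallback is to approximate $w$ by smooth strictly log-concave weights. For those, the Neumann eigenvalue can be shown to be monotone under the deformation $w\to1$, whose value is exactly $(\pi_{p}/L)^{p}$.
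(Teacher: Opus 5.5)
The paper gives no proof of this statement; it is quoted from \cite{ferone2012remark}. That proof follows the Payne--Weinberger scheme you outline: bisection into thin convex pieces, a log-concave one-dimensional weight $\omega h$, and a sharp one-dimensional Neumann estimate. Your Steps 1--2 have the right architecture, with one correction: the cutting direction cannot be prescribed. If $u$ is strictly increasing in the direction $\nu$, then $s\mapsto\int_{\{x\cdot\nu<s\}}|u|^{p-2}u\,\omega\,dx$ vanishes only at the two extremes. It is the rotation argument in a $2$-plane that selects $\nu$.

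The genuine gap is Step 3, which carries the whole sharp constant. First, with your normalization $v=r\sin_p\theta$, $v'=\mu^{1/p}r\cos_p\theta$, the Pr\"ufer equation is
\[
\theta'=\mu^{1/p}+\frac{1}{p-1}(\log w)'\,\Phi(\theta),
\]
with a \emph{plus} sign; for $p=2$ it reads $\theta'=\sqrt{\mu}+(\log w)'\sin\theta\cos\theta$. This is not cosmetic. With your sign, every use of the monotonicity of $(\log w)'$ goes the wrong way, which is the log-convex situation where the bound is false (e.g.\ $w=\cosh(kx)$ on $(-1,1)$, whose first nonzero Neumann eigenvalue tends to $0$ as $k\to\infty$). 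Second, independently of the sign, the claimed mechanism is false. Since $\Phi<0$ on $(\pi_p/2,\pi_p)$ and $\Phi>0$ on $(\pi_p,3\pi_p/2)$, for $w=e^{-x}$ the drift speeds $\theta$ up before the zero of $v$ and slows it down afterwards (for $w=e^{x}$ the reverse). So comparing each subinterval separately with $\theta'=\mu^{1/p}$ controls at most one of the two pieces and does not give $\mu^{1/p}L\ge\pi_p$.

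What is needed is to freeze the coefficient at the zero. Let $x_0$ be the first zero of $v$, and let $x_1>x_0$ be the first point with $\theta(x_1)=3\pi_p/2$; it exists since $\theta(L)\in\pi_p/2+\pi_p\mathbb{Z}$ and $\theta(L)>\pi_p$. Because $\theta'=\mu^{1/p}>0$ wherever $\Phi(\theta)=0$, $\theta$ crosses the levels $k\pi_p/2$ only upwards. Hence $\Phi(\theta(x))<0$ on $(0,x_0)$ and $\Phi(\theta(x))>0$ on $(x_0,x_1)$. Monotonicity of $(\log w)'$ then gives $(\log w)'(x)\Phi(\theta(x))\le(\log w)'(x_0)\Phi(\theta(x))$ on both pieces at once. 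Therefore $\theta'\le F(\theta):=\mu^{1/p}+a\Phi(\theta)$ with the constant $a=(\log w)'(x_0)/(p-1)$, which is a comparison with the exponential weight $e^{(\log w)'(x_0)x}$. Moreover $F>0$ on $[\pi_p/2,3\pi_p/2]$, since otherwise $\theta$ could not reach $3\pi_p/2$. Consequently
\[
L\ge x_1\ge\int_{\pi_p/2}^{3\pi_p/2}\frac{d\theta}{F(\theta)}\ge\frac{\pi_p}{\mu^{1/p}},
\]
where the last inequality uses $\Phi(\pi_p+s)=-\Phi(\pi_p-s)$ and $\frac{1}{m+y}+\frac{1}{m-y}\ge\frac{2}{m}$ for $|y|<m$. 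Your fallback, monotonicity of the Neumann eigenvalue along a deformation $w\to 1$, is not a standard fact and comes with no argument, so it does not fill the gap.
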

For the explicit expression of $\pi_{p}$, we refer to \cite{stanoyevitch1990geometry, lindqvist1995some, stanoyevitch1993products, reichel1999sturm, binding2006basis}. In the next part of this section, we introduce the definition of the $C_{p}$-functional and its upper and lower bounds, which will be essential for our proof. 

\begin{defn}
Let $1<p<\infty$. Then, for $\xi,\eta\in\mathbb{C}^{n}$, we define
\begin{align}\label{cp formula}
C_p(\xi,\eta):=|\xi|^p-|\xi-\eta|^p-p|\xi-\eta|^{p-2}\textnormal{Re}\langle(\xi-\eta),\overline{\eta}\rangle\geq0.
\end{align}
\end{defn}

\begin{lem}[\text{\cite[Step 3 of Proof of Theorem 1.2]{cazacu2024hardy}}]\label{lem1}
Let $p\geq2$. Then, for $\xi,\eta\in\mathbb{C}^n$, we have
\begin{align}\label{cp 111}
C_{p}(\xi,\eta)\geq c_1(p)|\eta|^{p},
\end{align}
where 
\begin{align*}
c_1(p)
= \inf_{(s,t)\in\mathbb{R}^2\setminus\{(0,0)\}}
\frac{\bigl[t^2 + s^2 + 2s + 1\bigr]^{\frac p2} -1-ps}
{\bigl[t^2 + s^2\bigr]^{\frac p2}}\in(0,1].
\end{align*}
\end{lem}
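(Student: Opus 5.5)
The plan is to reduce the inequality, by homogeneity and a rotation-type normalization, to exactly the two-variable quotient defining $c_1(p)$, and then to check separately that $c_1(p)\in(0,1]$.

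\emph{Reparametrization.} Set $a:=\xi-\eta$, so $\xi=a+\eta$ and
\begin{align*}
C_p(\xi,\eta)=|a+\eta|^p-|a|^p-p|a|^{p-2}\textnormal{Re}\langle a,\overline{\eta}\rangle .
\end{align*}
If $\eta=0$ both sides vanish. If $a=0$ and $\eta\neq 0$, then $C_p(\xi,\eta)=|\eta|^p$, so the claim reduces to $c_1(p)\le 1$, which is proved below.

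\emph{Main case $a\neq0$, $\eta\neq0$.} The quantity $C_p$ is positively $p$-homogeneous in the pair $(a,\eta)$, so we may assume $|a|=1$. Define
\begin{align*}
s:=\textnormal{Re}\langle a,\overline{\eta}\rangle\in\mathbb{R}, \qquad t:=\bigl(|\eta|^2-s^2\bigr)^{1/2}\ge 0 .
\end{align*}
Here $t$ is well defined because $|s|\le |a||\eta|=|\eta|$ by Cauchy--Schwarz. Expanding the Hermitian norm gives
\begin{align*}
|a+\eta|^2=1+2s+|\eta|^2=t^2+s^2+2s+1, \qquad |\eta|^2=t^2+s^2 .
\end{align*}
Therefore
\begin{align*}
\frac{C_p(\xi,\eta)}{|\eta|^p}=\frac{\bigl[t^2+s^2+2s+1\bigr]^{\frac p2}-1-ps}{\bigl[t^2+s^2\bigr]^{\frac p2}}\ge c_1(p),
\end{align*}
since $(s,t)\neq(0,0)$. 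This is exactly the quotient in the definition of $c_1(p)$, which proves \eqref{cp 111}.

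\emph{Checking $c_1(p)\in(0,1]$.} Write $F(s,t)$ for the quotient and $r^2:=s^2+t^2$.
\begin{itemize}[leftmargin=*]
\item \emph{Upper bound.} Fix $s=0$ and let $t\to\infty$. Then $F(0,t)=\bigl[(t^2+1)^{p/2}-1\bigr]/t^p\to1$, so $c_1(p)\le1$.
\item \emph{Numerator positive.} The numerator equals $g(x)-g(e_1)-\langle\nabla g(e_1),x-e_1\rangle$ with $g(x)=|x|^p$ on $\mathbb{R}^2$, $e_1=(1,0)$ and $x=(1+s,t)$. Strict convexity of $g$ for $p>1$ makes it strictly positive whenever $(s,t)\neq0$. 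The same argument gives $C_p\ge0$ in \eqref{cp formula}.
\item \emph{Infimum positive.} First, $F\to1$ as $r\to\infty$, uniformly in the direction. Second, near the origin a Taylor expansion gives the numerator as
\begin{align*}
\tfrac p2\bigl(r^2+(p-2)s^2\bigr)+O(r^3)\ \ge\ \tfrac p2 r^2+O(r^3)\qquad (p\ge2).
\end{align*}
Hence $F\to+\infty$ as $r\to0$ when $p>2$. When $p=2$, $F\equiv1$ identically.
\item \emph{Compactness.} By these two limits, the infimum is either approached at infinity (where $F\to1$), or near the origin (where $F\to+\infty$ or $F\equiv1$), or attained on a compact annulus. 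On the annulus $F$ is continuous and strictly positive, so its minimum is positive, and $c_1(p)>0$.
\end{itemize}

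The only delicate step is this uniform control of $F$ near $r=0$ and $r=\infty$; it is precisely where the hypothesis $p\ge2$ enters, because it makes the quadratic form $r^2+(p-2)s^2$ positive definite.
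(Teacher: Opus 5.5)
Your proof is correct. Normalizing $|\xi-\eta|=1$ and setting $s=\textnormal{Re}\langle\xi-\eta,\overline{\eta}\rangle$, $t=(|\eta|^2-s^2)^{1/2}$ turns $C_p(\xi,\eta)/|\eta|^p$ exactly into the quotient defining $c_1(p)$. Your uniform asymptotics at $r\to0$ and $r\to\infty$ (where $p\ge2$ enters through $r^2+(p-2)s^2\ge r^2$), combined with compactness on an annulus, then give $c_1(p)\in(0,1]$.

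The paper does not prove this lemma itself; it imports it from Cazacu--Flynn--Lam--Lu. The form of the constant $c_1(p)$ reflects exactly this two-parameter reduction, so your argument appears to be essentially the standard one.
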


The constant, in (\ref{cp 111}), is especially interesting. In \cite{cazacu2024hardy}, the authors asked whether it is the most optimal for the given inequality. This question was recently answered by Huang and Tong \cite{huang2025lp} and we state it here.

\begin{lem}[\text{\cite[Theorem 6]{huang2025lp}}]\label{lem chinese cp}
Let $p \geq 2$. The sharp constant $c_{1}(p)$ in (\ref{cp 111}) is determined as
\begin{align}\label{cp sharp formula}
c_{1}(p)=(p-1)(1-k_{0})^{p}+p k_{0}(1-k_{0})^{p-1}+k_{0}^{p}>0,
\end{align}
where $k_{0}=\frac{r_{0}}{1+r_{0}}$ and $r_{0}$ is the solution of the equation
\begin{align}\label{poly r}
r^{p-1}-(p-1) r-(p-2)=0.
\end{align}
\end{lem}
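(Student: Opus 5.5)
The plan is to turn the two-variable infimum defining $c_{1}(p)$ in Lemma \ref{lem1} into a one-variable minimization, and then identify its critical point with the root $r_{0}$ of (\ref{poly r}). Write
\[
F(s,t)=\frac{\bigl[t^{2}+(1+s)^{2}\bigr]^{\frac p2}-1-ps}{(t^{2}+s^{2})^{\frac p2}},
\]
and pass to the radius $\rho=\sqrt{s^{2}+t^{2}}>0$. The numerator becomes $(1+2s+\rho^{2})^{p/2}-1-ps$, to be minimized over $s\in[-\rho,\rho]$ for fixed $\rho$. Since $p\geq 2$, the map $g_{\rho}(s)=(1+2s+\rho^{2})^{p/2}-ps$ is convex in $s$. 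Its unconstrained minimizer solves $(1+2s+\rho^{2})^{\frac p2-1}=1$, i.e. $s=-\rho^{2}/2$ (for $p=2$ the numerator is simply $\rho^{2}$, constant in $s$).

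\textbf{Step 1: the case $\rho\leq 2$.} Here $s=-\rho^{2}/2$ lies in $[-\rho,\rho]$, so the minimal numerator is $p\rho^{2}/2$. The ratio is then $\tfrac p2\rho^{2-p}\geq p\,2^{1-p}$, a decreasing function of $\rho$ whose value at $\rho=2$ is $p\,2^{1-p}$.

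\textbf{Step 2: the case $\rho>2$.} Now $-\rho^{2}/2<-\rho$, so by convexity the constrained minimum sits at the endpoint $s=-\rho$, $t=0$. This gives
\[
h(\rho)=\frac{(\rho-1)^{p}-1+p\rho}{\rho^{p}} .
\]
At $\rho=2$ it agrees with Step 1, since $h(2)=p\,2^{1-p}$. Hence $c_{1}(p)=\inf_{\rho\geq 2}h(\rho)$. Note also $h(\rho)\to 1$ as $\rho\to\infty$, which is consistent with $c_{1}(p)\le 1$.

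\textbf{Step 3: the critical point.} Substitute $\rho=1+r$ with $r\geq 1$. The numerator becomes $r^{p}+pr+p-1$, and since $1-k_{0}=1/(1+r)$ and $k_{0}=r/(1+r)$,
\[
h=(p-1)(1-k)^{p}+pk(1-k)^{p-1}+k^{p},\qquad k=\frac{r}{1+r},
\]
which is exactly the expression (\ref{cp sharp formula}). Differentiating, the sign of $h'$ is the sign of
\[
(pr^{p-1}+p)(1+r)-p(r^{p}+pr+p-1),
\]
which simplifies to $p\bigl[r^{p-1}-(p-1)r-(p-2)\bigr]$. So critical points are precisely the roots of (\ref{poly r}).

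\textbf{Step 4: locating the minimum.} Let $\phi(r)=r^{p-1}-(p-1)r-(p-2)$. Then $\phi(1)=4-2p\leq 0$ and $\phi(r)\to+\infty$. Moreover $\phi$ is convex, since $p-1\geq 1$, and $\phi'(1)=0$, so $\phi'(r)\ge 0$ for $r\ge 1$. Hence there is a unique root $r_{0}\geq 1$. On $[1,\infty)$, $h$ decreases on $[1,r_{0}]$ and increases afterwards, so $r_{0}$ is the global minimizer there and the infimum equals $h(1+r_{0})$, giving (\ref{cp sharp formula}).

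Positivity of $c_{1}(p)$ is immediate, because each term in (\ref{cp sharp formula}) is nonnegative and $k_{0}^{p}>0$. Sharpness is automatic, since the infimum in Lemma \ref{lem1} is attained along $t=0$, $s=-(1+r_{0})$.

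The main obstacle is Step 2: justifying the reduction to $t=0$ through the convexity and endpoint analysis, and making sure the interior branch $\rho\le 2$ never beats the boundary branch. The argument handles this by observing that both branches meet at $\rho=2$ and that the $\rho\ge2$ branch decreases from there.
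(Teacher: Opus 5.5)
The paper does not prove this lemma. It is imported from \cite{huang2025lp}, and the paper's only work with it is the subsequent Remark, which uses (\ref{poly r}) to rewrite the value as $(p-1)(1+r_0)^{2-p}$. Your argument therefore supplies a self-contained proof, and it is correct.

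Fixing $\rho=\sqrt{s^2+t^2}$, you use convexity of $s\mapsto(1+2s+\rho^2)^{p/2}-ps$ on $[-\rho,\rho]$. This interval lies in the domain because $1+2s+\rho^2\ge(1-\rho)^2$ there. The argument cleanly shows that the extremal configuration is collinear: $t=0$, i.e.\ $\xi=k_0\eta$ and $\xi-\eta=(k_0-1)\eta$, which is what the parametrization by $k_0$ in the cited formula encodes. Your derivative computation giving the sign $p\,[r^{p-1}-(p-1)r-(p-2)]$ is right. As a check, $p=3$ gives $2-\sqrt2$, and $p=4$ gives $r_0=2$ and $c_1(4)=1/3$.

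Your route also exposes the whole profile of the minimization. Since $h$ decreases on $[2,1+r_0]$, you get $c_1(p)\le h(2)=p\,2^{1-p}$. This is slightly sharper than the upper bound $(p-1)/2^{p-2}$ in Proposition \ref{cp asymp}, which the paper derives only from $r_0>1$.

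Two small things to tidy.
\begin{enumerate}
\item At $p=2$ the polynomial in (\ref{poly r}) vanishes identically. Your claims in Step 4 that $\phi(r)\to+\infty$ and that the root $r_0\ge1$ is unique are therefore false there. Treat $p=2$ separately: $h\equiv 1$, and the formula returns $1$ for every choice of $r_0$.
\item Your sentence on sharpness only shows that the two-variable infimum is attained. To identify it with the sharp constant in (\ref{cp 111}), record the easy reduction. For $a=\xi-\eta\neq0$, write $\eta=|a|(s\,e+t\,e')$ with $e=a/|a|$ and $e'$ a unit vector orthogonal to $e$ for the real inner product on $\mathbb{C}^n\cong\mathbb{R}^{2n}$. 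Then $C_p(\xi,\eta)=F(s,t)\,|\eta|^p$. The case $a=0$ gives $C_p(\xi,\eta)=|\eta|^p$, a ratio $1\ge c_1(p)$. The minimizer is then realized by $\xi=-r_0e_1$, $\eta=-(1+r_0)e_1$.
\end{enumerate}
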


\begin{ex}
For $p=3$, the sharp constant is given by $c_{1}(3)=2-\sqrt{2}$.
\end{ex}

\begin{rem}
Note that since $k_{0}=\frac{r_{0}}{1+r_{0}}$, we have $1-k_{0}=\frac{1}{1+r_{0}}$. Substituting this into (\ref{cp sharp formula}) gives us
\begin{align*}
c_{1}(p)=(p-1)\left(\frac{1}{1+r_{0}}\right)^{p}+p\frac{r_{0}}{1+r_{0}}\left(\frac{1}{1+r_{0}}\right)^{p-1}+\left(\frac{r_{0}}{1+r_{0}}\right)^{p}.
\end{align*}
This simplifies to
\begin{align}\label{almost simple}
c_{1}(p)=\frac{p-1+pr_{0}+r^{p}_{0}}{(1+r_{0})^{p}}.
\end{align}
From equation (\ref{poly r}), we get that
\begin{align}\label{rp eq}
r^{p}_{0}=r^{2}_{0}(p-1)+r_{0}(p-2).
\end{align}
Now we substitute (\ref{rp eq}) to (\ref{almost simple}):
\begin{align*}
c_{1}(p)&=\frac{p-1+pr_{0}+r^{2}_{0}(p-1)+r_{0}(p-2)}{(1+r_{0})^{p}}
\\&=\frac{p-1+(2p-2)r_{0}+r^{2}_{0}(p-1)}{(1+r_{0})^{p}}
\\&=(p-1)(r_{0}+1)^{2-p}.
\end{align*}
This is a much simpler expression for $c_{1}(p)$ and it is the one which we will use throughout the text.
\end{rem}

Although Lemma \ref{lem chinese cp} makes it possible to compute the sharp constant $c_{1}(p)$ via a simple polynomial equation, it is unclear what happens as $p\to \infty$, which is interesting on its own right. That is why, using Lemma \ref{lem chinese cp}, we provide lower and upper bounds for the constant $c_{1}(p)$ (for more details see Section \ref{sec proof prop}) that exactly reveal its behavior as $p\to \infty$.

\begin{prop}\label{cp asymp}
For any $p\geq 2$, we have
\begin{align*}
\frac{1}{2^{p-2}}\leq c_{1}(p)\leq \frac{p-1}{2^{p-2}}.
\end{align*}
Moreover, as $p\to \infty$, the constant $c_{1}(p) \to 0$.
\end{prop}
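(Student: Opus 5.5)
Using the simplified expression $c_{1}(p)=(p-1)(1+r_{0})^{2-p}$ from the Remark, it suffices to show that the root $r_{0}$ of (\ref{poly r}) satisfies
\begin{align*}
1\le r_{0}\quad\text{and}\quad (p-1)(1+r_{0})^{2-p}\ge 2^{2-p}.
\end{align*}
First we fix which root is meant. Set $f(r)=r^{p-1}-(p-1)r-(p-2)$ on $[0,\infty)$. Then $f(0)=-(p-2)\le 0$, and $f'(r)=(p-1)(r^{p-2}-1)$ is negative on $(0,1)$ and positive on $(1,\infty)$, while $f(r)\to\infty$. Hence $f$ has a unique positive root $r_{0}$, and $r_{0}\ge 1$ because $f(1)=2-2p<0$. (For $p=2$ the equation degenerates to $r-r=0$; there we simply check directly that $c_{1}(2)=1$, which fits both bounds.)

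\textbf{Upper bound.} Since $r_{0}\ge 1$ and $2-p\le 0$, we get $(1+r_{0})^{2-p}\le 2^{2-p}$. Therefore $c_{1}(p)\le (p-1)2^{2-p}$.

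\textbf{Lower bound.} Here we need an upper estimate for $r_{0}$, namely $(1+r_{0})^{p-2}\le 2^{p-2}(p-1)$, equivalently $1+r_{0}\le 2(p-1)^{1/(p-2)}$. Because $f$ is increasing on $(1,\infty)$, it is enough to find some $R$ with $f(R)\ge 0$; this gives $r_{0}\le R$. I would try $R=2(p-1)^{1/(p-2)}-1$. Writing $a=(p-1)^{1/(p-2)}\ge 1$, we need
\begin{align*}
(2a-1)^{p-1}\ge (p-1)(2a-1)+p-2 .
\end{align*}
Since $a^{p-2}=p-1$ and $2a-1\ge a$, the left side is at least $a^{p-2}(2a-1)=(p-1)(2a-1)$. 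This leaves an additive slack of $p-2$ to absorb. To get it, use the sharper bound $(2a-1)^{p-2}\ge a^{p-2}+(p-2)(a-1)$ from convexity, or simply $(2a-1)^{p-2}\ge a^{p-2}+1$ when $a>1$ and $p\ge 3$, and check small $p$ directly.

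A cleaner alternative is to test $R=p-2$ or a similar explicit value. Note that $c_{1}(p)=(p-1)(1+r_{0})^{2-p}$ with $1+r_{0}\le 2$ would force $r_{0}=1$, which is false. So the lower bound must come from the factor $p-1$ compensating for $r_{0}>1$; the $2(p-1)^{1/(p-2)}$ estimate is exactly what is required, and I expect this inequality to be the main obstacle.

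\textbf{Limit as $p\to\infty$.} This follows at once from the upper bound, since $(p-1)/2^{p-2}\to 0$.
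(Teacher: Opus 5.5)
Your reduction to the formula $c_1(p)=(p-1)(1+r_0)^{2-p}$, your upper bound and your limit are correct and match the paper. There is one harmless slip: $f(1)=4-2p$, not $2-2p$, but it is still negative for $p>2$.

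The genuine gap is the lower bound. You correctly reduce it to showing $f(2a-1)\ge 0$ with $a=(p-1)^{1/(p-2)}$. You never prove this, and the completions you indicate fail. After $(2a-1)^{p-1}\ge (p-1)(2a-1)$ you still need an additive $p-2$.

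\begin{itemize}
\item Your bound $(2a-1)^{p-2}\ge a^{p-2}+(p-2)(a-1)$ only supplies $(p-2)(a-1)(2a-1)$. Since $a\to 1$ as $p\to\infty$, this drops below $p-2$ already at $p=6$, where $a=5^{1/4}\approx 1.495$ and $(a-1)(2a-1)\approx 0.986$.
\item The bound $(2a-1)^{p-2}\ge p$ only supplies $2a-1$, which is below $p-2$ already at $p=5$.
\item Both bounds rely on convexity of $t^{p-2}$, so they need $p\ge 3$. The interval $2<p<3$ is a continuum and cannot be settled by checking small $p$ directly.
\item Testing $R=p-2$, even where $f(p-2)\ge 0$, only gives $(1+r_0)^{p-2}\le (p-1)^{p-2}$, which is far weaker than $2^{p-2}(p-1)$.
\end{itemize}

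The missing idea is a single application of convexity with exponent $p-1$, valid for every $p\ge 2$, combined with the root equation. In your setting, $a^{p-2}=p-1$ gives $(p-1)(2a-1)+(p-2)=2a^{p-1}-1$. Hence
\begin{align*}
f(2a-1)=(2a-1)^{p-1}+1-2a^{p-1}\ge 0
\end{align*}
by midpoint convexity of $t\mapsto t^{p-1}$ at the points $2a-1$ and $1$, whose midpoint is $a$.

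The paper runs the same argument directly at $r_0$. Midpoint convexity gives $(r_0+1)^{p-1}\le 2^{p-2}(r_0^{p-1}+1)$, and the root equation rewrites $r_0^{p-1}+1=(p-1)(r_0+1)$. Therefore $(r_0+1)^{p-2}\le 2^{p-2}(p-1)$, that is, $c_1(p)\ge 2^{2-p}$, with no case distinction in $p$.
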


\begin{lem}[\text{\cite[Lemma 2.2]{CT24}}]\label{lem2}
Let $1<p<2\leq n$. Then, for $\xi,\eta\in \mathbb{C}^{n}$, we have
\begin{align*}
C_p(\xi, \eta) \geq c_2(p) \frac{|\eta|^2}{\left( |\xi| + |\xi - \eta| \right)^{2-p}},
\end{align*}
where
\begin{align*}
c_2(p) := \inf_{s^2 + t^2 > 0} \frac{\left( t^2 + s^2 + 2s + 1 \right)^{\frac{p}{2}} - 1 - ps}{\left( \sqrt{t^2 + s^2 + 2s + 1} + 1 \right)^{p-2} (t^2 + s^2)} \in \left(0,  \frac{p(p-1)}{2^{p-1}} \right].
\end{align*}
\end{lem}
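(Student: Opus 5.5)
The plan is to reduce the vector inequality to the two-parameter scalar quotient that defines $c_2(p)$, using the $p$-homogeneity of $C_p$. We then check separately that this infimum is strictly positive and bounded above by $\frac{p(p-1)}{2^{p-1}}$.

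\emph{Degenerate case $\xi=\eta$.} Write $w:=\xi-\eta$. If $w=0$, then $C_p(\xi,\eta)=|\eta|^p$. The right-hand side is $c_2(p)|\eta|^2/|\eta|^{2-p}=c_2(p)|\eta|^p$. So it suffices that $c_2(p)\le 1$. This follows from the upper bound $c_2(p)\le \frac{p(p-1)}{2^{p-1}}$ established below, since $\frac{p(p-1)}{2^{p-1}}\le 1$ for $1<p<2$. (Alternatively, the quotient tends to $1$ as $s^2+t^2\to\infty$.)

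\emph{Reduction for $w\neq0$.} Both sides are homogeneous of degree $p$ under $(\xi,\eta)\mapsto(\lambda\xi,\lambda\eta)$ with $\lambda>0$. Introduce real parameters via the decomposition of $\eta$ relative to $w$:
\[
s:=\frac{\mathrm{Re}\langle w,\overline{\eta}\rangle}{|w|^2},\qquad t^2:=\frac{|\eta|^2}{|w|^2}-s^2\ge 0 .
\]
The inequality $t^2\ge0$ is Cauchy--Schwarz in $\mathbb{C}^n\cong\mathbb{R}^{2n}$. Since $\xi=w+\eta$, we get $|\xi|^2=|w|^2(1+2s+s^2+t^2)$. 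Substituting into (\ref{cp formula}) gives
\[
\frac{C_p(\xi,\eta)}{|w|^p}=(t^2+s^2+2s+1)^{\frac p2}-1-ps.
\]
Likewise,
\[
\frac{|\eta|^2}{|w|^p(|\xi|+|w|)^{2-p}}=\frac{t^2+s^2}{\bigl(\sqrt{t^2+s^2+2s+1}+1\bigr)^{2-p}}.
\]
Dividing the first expression by the second yields exactly the quotient in the definition of $c_2(p)$. Hence the inequality holds with $c_2(p)$ equal to the infimum, trivially when $\eta=0$ and by definition otherwise.

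\emph{Main obstacle: $0<c_2(p)\le \frac{p(p-1)}{2^{p-1}}$.} Call the quotient $Q(s,t)$. We analyse it in three regimes, then combine them by compactness.
\begin{itemize}
\item Near $(0,0)$: a second-order Taylor expansion of the numerator gives $\frac p2(s^2+t^2)+\frac{p(p-2)}{2}s^2$. Since $p<2$, this is at least $\frac{p(p-1)}{2}(s^2+t^2)$, with equality along $t=0$. The denominator tends to $2^{2-p}(s^2+t^2)$. So $Q\to \frac{p(p-1)}{2^{p-1}}$ along $t=0$, which gives the upper bound, and $\liminf_{(s,t)\to0}Q\ge \frac{p(p-1)}{2^{p-1}}>0$.
\item At infinity: with $r^2=s^2+t^2$, the numerator behaves like $r^p$ and the denominator like $r^{2-p}r^2=r^p$. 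Hence $Q\to1$.
\item On compact sets away from the origin: the numerator equals $|e+\zeta|^p-1-p\langle e,\zeta\rangle$ for a unit vector $e$ and $\zeta=(s,t)$. This is strictly positive for $\zeta\neq0$ by strict convexity of $|\cdot|^p$. It remains positive even at the point $\xi=0$, i.e. $(s,t)=(-1,0)$, where it equals $p-1>0$. The denominator is continuous and positive there.
\end{itemize}
Continuity then gives $\inf Q>0$.
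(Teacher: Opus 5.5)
Your proof is correct. Normalizing by $|\xi-\eta|$ and decomposing $\eta$ along $\xi-\eta$ lands exactly on the quotient that defines $c_2(p)$. The behaviour at the origin, the behaviour at infinity and compactness in between then give $0<c_2(p)\le \frac{p(p-1)}{2^{p-1}}$. The paper gives no proof of this lemma; it quotes it from \cite{CT24}. Your argument is the standard route for constants of this form, such as $c_1(p)$ in Lemma \ref{lem1}, so there is no different in-paper approach to compare against.

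There are two harmless exponent slips:
\begin{itemize}
\item Near the origin, the denominator $\bigl(\sqrt{t^2+s^2+2s+1}+1\bigr)^{p-2}(t^2+s^2)$ behaves like $2^{p-2}(s^2+t^2)$, not $2^{2-p}(s^2+t^2)$.
\item At infinity it behaves like $r^{p-2}r^2=r^p$; you wrote $r^{2-p}r^2$, which equals $r^{4-p}$, not $r^p$.
\end{itemize}
The corrected exponents are what actually produce the limits $\frac{p(p-1)}{2^{p-1}}$ and $1$ that you state.

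In the case $\xi=\eta$ you are implicitly using the convention $|\xi-\eta|^{p-2}(\xi-\eta)=0$, which is needed since $p<2$; it is worth saying explicitly.
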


\begin{lem}[\text{\cite[Lemma 2.3]{CT24}}]\label{lem3}
Let $1<p<2\leq n$. Then, for $\xi,\eta\in \mathbb{C}^{n}$, we have
\begin{align*}
C_p(\xi, \eta) \leq c_3(p) \frac{|\eta|^2}{\left( |\xi| + |\xi - \eta| \right)^{2-p}},
\end{align*}
where
\begin{align*}
c_3(p) := \sup_{s^2 + t^2 > 0} \frac{\left( t^2 + s^2 + 2s + 1 \right)^{\frac{p}{2}} - 1 - ps}{\left( \sqrt{t^2 + s^2 + 2s + 1} + 1 \right)^{p-2} (t^2 + s^2)} \in \left[ \frac{p}{2^{p-1}}, +\infty \right).
\end{align*}
\end{lem}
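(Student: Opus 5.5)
The plan is to reduce the inequality, by homogeneity, to the two-variable quotient that defines $c_3(p)$. Then I will check separately that this supremum is finite and at least $p/2^{p-1}$.

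\emph{Reduction.} Put $w:=\xi-\eta$ and first assume $w\neq0$ and $\eta\neq 0$. Both sides of the claimed inequality are positively homogeneous of degree $p$ in $(\xi,\eta)$, so I divide through by $|w|^{p}$. Define
\begin{align*}
s:=\frac{\textnormal{Re}\langle w,\overline{\eta}\rangle}{|w|^{2}},\qquad t^{2}:=\frac{|\eta|^{2}}{|w|^{2}}-s^{2}\geq0,
\end{align*}
where $t^2\ge0$ by Cauchy--Schwarz. Then $|\eta|^{2}/|w|^{2}=s^{2}+t^{2}$. Expanding $|\xi|^{2}=|w+\eta|^{2}=|w|^{2}+2\textnormal{Re}\langle w,\overline{\eta}\rangle+|\eta|^{2}$ gives $|\xi|/|w|=\sqrt{t^{2}+s^{2}+2s+1}$. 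Substituting into \eqref{cp formula},
\begin{align*}
\frac{C_p(\xi,\eta)}{|w|^{p}}=\bigl(t^{2}+s^{2}+2s+1\bigr)^{\frac p2}-1-ps,
\end{align*}
while
\begin{align*}
\frac{|\eta|^{2}}{|w|^{p}(|\xi|+|w|)^{2-p}}=\frac{t^{2}+s^{2}}{\bigl(\sqrt{t^{2}+s^{2}+2s+1}+1\bigr)^{2-p}}.
\end{align*}
The ratio of these two expressions is exactly the quotient whose supremum defines $c_3(p)$. Hence the inequality holds by definition once $c_3(p)<\infty$. If $\eta=0$ both sides vanish.

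\emph{The case $w=0$.} Here $\xi=\eta$, so $C_p=|\eta|^{p}$ and the right-hand side equals $c_3(p)|\eta|^{p}$. So I only need $c_3(p)\geq1$. This follows from the lower bound below, since $p/2^{p-1}\ge1$ on $(1,2)$. Indeed, $f(p)=\ln p-(p-1)\ln2$ is concave with $f(1)=f(2)=0$, so $f\ge0$ on $[1,2]$. Alternatively, it follows by continuity, letting $(s,t)\to\infty$.

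\emph{Finiteness of $c_3(p)$.} This is the main step. The quotient is continuous on $\mathbb{R}^{2}\setminus\{(0,0)\}$, and its denominator is positive there, including at $(s,t)=(-1,0)$ where $\xi=0$: there the numerator equals $p-1$ and the denominator equals $1$. It remains to control the two limiting regimes.
\begin{itemize}[leftmargin=*]
\item \emph{Near the origin.} Set $x=2s+s^{2}+t^{2}$. A second-order Taylor expansion of $(1+x)^{p/2}$ gives numerator $=\tfrac p2(s^{2}+t^{2})+\tfrac{p(p-2)}{2}s^{2}+o(s^{2}+t^{2})$. The denominator is $\sim2^{p-2}(s^{2}+t^{2})$, so the quotient stays bounded.
\item \emph{At infinity.} With $r^{2}=s^{2}+t^{2}$, the numerator is $O(r^{p})+O(r)$ and the denominator behaves like $r^{p-2}\cdot r^{2}=r^{p}$, so the quotient again stays bounded.
\end{itemize}
A continuous function on $\mathbb{R}^2\setminus\{(0,0)\}$ that is bounded near the origin and near infinity is bounded on the remaining compact annulus, so $\sup<\infty$.

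\emph{Lower bound.} Take $s=0$ and let $t\to0$. The expansion above gives numerator $\sim\tfrac p2 t^{2}$ and denominator $\sim2^{p-2}t^{2}$. Hence the quotient tends to $p/2^{p-1}$, and so $c_3(p)\ge p/2^{p-1}$.

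The main obstacle is keeping the complex reduction honest, namely that only $\textnormal{Re}\langle w,\overline\eta\rangle$ enters and that $t^{2}\ge0$, together with making the two asymptotic bounds uniform in direction.
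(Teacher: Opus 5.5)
Your argument is correct. Dividing by $|w|^p$ with $s=\textnormal{Re}\langle w,\overline{\eta}\rangle/|w|^2$ and $t^2=|\eta|^2/|w|^2-s^2$ turns the ratio into exactly the quotient defining $c_3(p)$. Your checks near the origin, at $(s,t)=(-1,0)$ and at infinity give $c_3(p)<\infty$; the bound at infinity is uniform in direction because $\sqrt{(s+1)^2+t^2}+1\le r+2$ and $p-2<0$. The limit along $s=0$ gives $c_3(p)\ge p/2^{p-1}\ge 1$.

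The paper gives no proof of its own here, since the lemma is quoted from \cite{CT24}. Your reduction is the one built into the definition of $c_3(p)$. You should state explicitly the convention $|w|^{p-2}w:=0$ at $w=0$, which your case $\xi=\eta$ relies on because $p<2$.
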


Lastly, we use the complex-valued version of a Picone-type identity \cite[Theorem 3.13]{apseit2025sharp} that connects to the $C_{p}$-functional (\ref{cp formula}):

\begin{thm}[\cite{apseit2025sharp}]\label{thm picone}
Let $u$ be a complex-valued function on $\Omega \subset \mathbb{R}^{n}$ and $\phi$ be a complex-valued function such that $\phi \neq 0$ on $\Omega \subset \mathbb{R}^{n}$. Then, we have
\begin{align*}
&C_p(\xi, \eta) = |\nabla u|^p + (p-1) \left| \frac{\nabla \phi}{\phi} u \right|^p - p \, \textnormal{Re} \left[ \left| \frac{\nabla \phi}{\phi} u \right|^{p-2} \frac{u}{\phi}\langle\nabla \phi , \overline{\nabla u}\rangle \right],
\\&R_p(\xi, \eta) = |\nabla u|^p - |\nabla \phi|^{p-2} 
\biggl\langle\nabla \left( \frac{|u|^p}{|\phi|^{p-2} \phi} \right) , \nabla \phi \biggr\rangle 
\end{align*}
and
\begin{align*}
C_p(\xi,\eta)=R_p(\xi,\eta)\geq0,
\end{align*}
where $C_p(\cdot,\cdot)$ is given in (\ref{cp formula}) and
\begin{align*}
\xi := \nabla u, \quad \eta := \nabla u - \frac{\nabla \phi}{\phi} u.
\end{align*}
\end{thm}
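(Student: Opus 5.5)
The plan is to verify all three claims by direct pointwise algebra at a fixed point of $\Omega$ where $\phi\neq0$ and $u,\phi$ are differentiable. Throughout, set
\begin{align*}
w:=\frac{\nabla\phi}{\phi}\,u=\xi-\eta .
\end{align*}

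\textbf{Step 1 (the formula for $C_p$).} Substitute $\xi-\eta=w$ and $\eta=\nabla u-w$ into (\ref{cp formula}). Since $\textnormal{Re}\langle w,\overline{w}\rangle=|w|^2$, we have
\begin{align*}
\textnormal{Re}\langle w,\overline{\eta}\rangle=\textnormal{Re}\langle w,\overline{\nabla u}\rangle-|w|^{2}.
\end{align*}
It follows that
\begin{align*}
C_p(\xi,\eta)=|\nabla u|^p-|w|^p-p|w|^{p-2}\textnormal{Re}\langle w,\overline{\nabla u}\rangle+p|w|^p .
\end{align*}
Moreover, $\langle w,\overline{\nabla u}\rangle=\frac{u}{\phi}\langle\nabla\phi,\overline{\nabla u}\rangle$. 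This gives exactly the first displayed expression, with $(p-1)|w|^p=(p-1)\left|\frac{\nabla\phi}{\phi}u\right|^p$.

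\textbf{Step 2 (nonnegativity).} View $\mathbb{C}^n\cong\mathbb{R}^{2n}$ and set $f(z)=|z|^p$, which is convex and $C^1$ for $p>1$. Its real gradient pairing is $\langle\nabla f(z),h\rangle_{\mathbb{R}}=p|z|^{p-2}\textnormal{Re}\langle z,\overline h\rangle$. Hence
\begin{align*}
C_p(\xi,\eta)=f(\xi)-f(\xi-\eta)-\langle\nabla f(\xi-\eta),\eta\rangle_{\mathbb{R}},
\end{align*}
which is the first-order convexity remainder of $f$ and is therefore $\geq0$.

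\textbf{Step 3 (the identity $C_p=R_p$).} Here I write $\frac{1}{|\phi|^{p-2}\phi}=|\phi|^{-p}\overline{\phi}$ and expand the gradient using $\nabla|u|^p=p|u|^{p-2}\textnormal{Re}(\overline u\nabla u)$ and $\nabla|\phi|^{-p}=-p|\phi|^{-p-2}\textnormal{Re}(\overline\phi\nabla\phi)$. This produces three terms, each then paired with $|\nabla\phi|^{p-2}\nabla\phi$ (with the real part understood, as is implicit for complex functions):
\begin{itemize}
\item[(a)] $|u|^p|\phi|^{-p}|\nabla\phi|^{p-2}\langle\nabla\overline\phi,\nabla\phi\rangle=|w|^p$;
\item[(b)] $-p|u|^p|\phi|^{-p-2}|\nabla\phi|^{p-2}\,\textnormal{Re}(\overline\phi\nabla\phi)\cdot\overline\phi\nabla\phi$, which should reduce to $-p|w|^p$;
\item[(c)] $p|u|^{p-2}|\phi|^{-p}\textnormal{Re}(\overline u\nabla u)\cdot\overline\phi\nabla\phi\,|\nabla\phi|^{p-2}$, which should match the cross term $p|w|^{p-2}\textnormal{Re}\bigl[\frac u\phi\langle\nabla\phi,\overline{\nabla u}\rangle\bigr]$.
\end{itemize}
Once (a)--(c) are established, $|\nabla u|^p-(\text{a})-(\text{b})-(\text{c})$ equals the right-hand side of Step 1, so $R_p=C_p$. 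Combined with Step 2, this gives $C_p=R_p\geq0$.

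\textbf{Main obstacle.} The hard part is Step 3 in the genuinely complex case. The reductions (b) and (c) are immediate when $u$ and $\phi$ are real, but for complex functions they depend on how the bilinear pairing and the real parts are placed; with a naive placement, (b) and (c) do not collapse exactly. My first attempt will be to use the convention of \cite{apseit2025sharp}: interpret $\langle\cdot,\cdot\rangle$ in $R_p$ as $\textnormal{Re}\langle\cdot,\overline{\cdot}\rangle$ applied to the correct conjugates, and recompute (b) and (c) term by term. If the complex version still does not close, I will fall back to proving the identity under that paper's exact hypotheses, and in particular in the real-valued case that is used later for $u$ and $u_1>0$.
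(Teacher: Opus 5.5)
The paper gives no proof of this statement (it is quoted from \cite{apseit2025sharp}), so I assess your argument on its own. Steps 1 and 2 are correct. Step 1 is pure algebra and holds verbatim for complex $u$ and $\phi$. Step 2 correctly identifies $C_p$ as the first-order convexity remainder of $z\mapsto|z|^p$ on $\mathbb{R}^{2n}$, where $\mathrm{Re}\langle z,\overline{h}\rangle$ is the real inner product.

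The genuine gap is Step 3. You leave the complex case open, hoping that some placement of conjugates and real parts will make (b) and (c) collapse. No placement can do this, because for $\phi$ with nonconstant phase the identity $C_p=R_p$ is false. Take $u\equiv 1$ near a point and $\phi=e^{ix_1}$. Then $\xi=0$, $w=ie_1$ and $\eta=-ie_1$, and your Step 1 formula gives $C_p=p-1$. On the other hand, $\frac{|u|^p}{|\phi|^{p-2}\phi}=e^{-ix_1}$ and $|\nabla\phi|=1$, so $R_p=-\langle\nabla e^{-ix_1},\nabla e^{ix_1}\rangle=-1<0$. Conjugating exactly one of the two factors gives $R_p=\cos 2x_1$ instead, which is not identically $p-1$ either. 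So the statement as quoted cannot be proved in the complex-$\phi$ generality, and even $R_p\geq 0$ fails.

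What is missing is the identification of the obstruction. Write $v:=\overline{\phi}\nabla\phi$ and $a:=\overline{u}\nabla u$.
In (b) you get $\mathrm{Re}\langle \mathrm{Re}\,v,v\rangle=|\mathrm{Re}\,v|^2$, whereas $-p|w|^p$ requires $|v|^2=|\phi|^2|\nabla\phi|^2$.
In (c) you get $\langle\mathrm{Re}\,a,\mathrm{Re}\,v\rangle$, whereas the cross term carries $\mathrm{Re}\langle v,\overline{a}\rangle=\langle\mathrm{Re}\,v,\mathrm{Re}\,a\rangle+\langle\mathrm{Im}\,v,\mathrm{Im}\,a\rangle$; both share the factor $p|u|^{p-2}|\phi|^{-p}|\nabla\phi|^{p-2}$. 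Altogether
\[
C_p-\mathrm{Re}\,R_p=p\,|u|^{p-2}|\phi|^{-p-2}|\nabla\phi|^{p-2}\bigl(|u|^2|\mathrm{Im}\,v|^2-|\phi|^2\langle \mathrm{Im}\,v,\mathrm{Im}\,a\rangle\bigr).
\]
This vanishes for every $u$ exactly when $\mathrm{Im}(\overline{\phi}\nabla\phi)=0$, i.e.\ when $\phi$ is real-valued up to a constant unimodular factor. Indeed, taking $u$ real and nonzero already gives a strictly positive discrepancy wherever $\mathrm{Im}\,v\neq 0$.

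The statement you should prove is therefore: $u$ complex-valued, $\phi$ real-valued and nonvanishing. Your computation of (a)--(c) then goes through verbatim, since it uses only the reality of $\phi$. This case covers every use in the paper, where $\phi=u_1>0$ (Theorem \ref{thm apseit}) or $\phi=u_1^{\gamma}>0$ (Theorem \ref{thm1}). Your proposed fallback of also taking $u$ real is weaker than needed, because those two theorems are stated for complex-valued $u$.
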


We refer to the crucial work of Allegretto and Huang \cite{allegretto1998picones}, where the Picone's identity for real-valued functions made its first appearance.

\section{Main results}\label{main results}

In this section, we state the stability of the $L^{p}$-Poincar\'e inequality. First, we show results concerning the usual Euclidean $p$-Laplacian, then for the Gaussian $p$-Laplacian. We then apply the stability inequality to establish known and new fundamental gap estimates.

\begin{thm}[\cite{apseit2025sharp}]\label{thm apseit}
Let $1<p<\infty$, $\Omega\subset \mathbb{R}^n$ be an open bounded set. Then, for all complex-valued $u\in C^{\infty}_{0}(\Omega)$, we have
\begin{align}\label{general poincare}
\int_{\Omega}|\nabla u|^{p}dx-\lambda_1(p, \Omega)\int_{\Omega}|u|^{p}dx=\int_{\Omega}C_{p}\left(\nabla u,u_{1}\nabla\left(\frac{u}{u_{1}}\right)\right)dx.
\end{align}
Moreover, the remainder term vanishes if and only if $u=cu_{1}$ for all $c\in \mathbb{C}$.
\end{thm}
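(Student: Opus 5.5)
The plan is to apply the Picone-type identity of Theorem \ref{thm picone} pointwise with $\phi=u_{1}$, the positive first eigenfunction, and then integrate using the weak eigenvalue equation $-\Delta_{p}u_{1}=\lambda_{1}(p,\Omega)u_{1}^{p-1}$.

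\textbf{Pointwise identity.} Since $u_{1}>0$ in $\Omega$ and $u_{1}\in C^{1,\beta}_{loc}(\Omega)$, the quotient $|u|^{p}/u_{1}^{p-1}$ is a legitimate $C^{1}$ function with compact support in $\Omega$ for $u\in C_{0}^{\infty}(\Omega)$. With $\xi=\nabla u$, Theorem \ref{thm picone} gives
\begin{align*}
\eta=\nabla u-\frac{\nabla u_{1}}{u_{1}}u=u_{1}\nabla\left(\frac{u}{u_{1}}\right).
\end{align*}
It also gives pointwise
\begin{align*}
C_{p}\left(\nabla u,u_{1}\nabla\left(\frac{u}{u_{1}}\right)\right)=|\nabla u|^{p}-|\nabla u_{1}|^{p-2}\left\langle\nabla\left(\frac{|u|^{p}}{u_{1}^{p-1}}\right),\nabla u_{1}\right\rangle.
\end{align*}

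\textbf{Integration.} Integrate over $\Omega$ and test the weak form of (\ref{p-laplacian}) for $u_{1}$ with $\psi=|u|^{p}/u_{1}^{p-1}$. This $\psi$ is compactly supported and $C^{1}$, so it is admissible. The weak form gives
\begin{align*}
\int_{\Omega}|\nabla u_{1}|^{p-2}\langle\nabla\psi,\nabla u_{1}\rangle dx=\lambda_{1}(p,\Omega)\int_{\Omega}u_{1}^{p-1}\psi\, dx=\lambda_{1}(p,\Omega)\int_{\Omega}|u|^{p}dx.
\end{align*}
Substituting this yields (\ref{general poincare}) at once.

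\textbf{Equality case.} If $u=cu_{1}$, then $\eta=0$ and $C_{p}(\xi,0)=0$, so the remainder vanishes. Conversely, suppose the integral of the nonnegative quantity $C_{p}$ vanishes. Then $C_{p}(\xi,\eta)=0$ a.e. The key step is to show that $C_{p}(\xi,\eta)=0$ forces $\eta=0$. Write $C_{p}(\xi,\eta)=f(\xi)-f(\xi-\eta)-\mathrm{Re}\langle\nabla f(\xi-\eta),\overline{\eta}\rangle$ with $f(z)=|z|^{p}$. This is the remainder of the first-order Taylor expansion of the strictly convex function $f$ on $\mathbb{C}^{n}\cong\mathbb{R}^{2n}$, so it is zero only when $\eta=0$. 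For $p\geq2$ one can instead cite Lemma \ref{lem1}, and for $1<p<2$ Lemma \ref{lem2}.

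Hence $\nabla(u/u_{1})=0$ a.e. on the connected components of $\Omega$. This gives $u=cu_{1}$, using connectedness, which is implicit in the simplicity and positivity of $u_{1}$.

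\textbf{Main obstacle.} The delicate point is justifying the test function $\psi$ and the identity near points where $u_{1}$ is small. This is exactly where compact support of $u$ is used: $u_{1}$ is bounded below by a positive constant on $\mathrm{supp}\,u$, so no boundary terms or singularities arise.
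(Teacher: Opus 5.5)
Your proposal is correct and follows essentially the same route as the paper, which takes this identity from the literature and proves its Gaussian analogue (Theorem \ref{thm1}) the same way: integrate the Picone identity of Theorem \ref{thm picone} with $\phi$ equal to the positive first eigenfunction, then use the eigenvalue equation. The differences are that you test the weak formulation with $|u|^{p}/u_{1}^{p-1}$ instead of the paper's pointwise divergence and integration-by-parts step, which tacitly needs $\Delta_{p}u_{1}$ to make sense pointwise, so your version is cleaner; and you actually prove the equality case via strict convexity of $|z|^{p}$, which the paper does not spell out.
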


\begin{rem}
The identity, as with all such identities, thanks to Lemmata \ref{lem1}, \ref{lem2} and \ref{lem3} allows us to derive all possible optimizers of the $L^{p}$-Poincar\'e inequality for any $p\in \left(1,\infty\right)$. Such set, we will denote as $E_{Poin}:=\{cu_{1}: c\in \mathbb{C}\}$. If we restrict the $L^{p}$-Poincar\'e inequality to real-valued functions, then the set of all optimizers becomes $E_{Poin}=\{cu_{1}: c\in \mathbb{R}\}$.
\end{rem}

The identity (\ref{general poincare}) is the starting point in proving the stability result. In combination with the Theorem \ref{thm saka} and Theorem \ref{thm log-convex stab}, we finally obtain the stability result of the $L^{p}$-Poincar\'e inequality: 

\begin{thm}\label{thm stability poincare}
Let $p, n\geq 2$, $\Omega\subset\mathbb{R}^n$ be an open bounded convex set having diameter $\mathrm{diam}(\Omega)$ with smooth boundary $\partial \Omega$. Then, for every real-valued $u\in C_{0}^{\infty}(\Omega)$, we have
\begin{align}\label{stability}
\int_{\Omega}|\nabla u|^{p}dx-\lambda_{1}(p,\Omega)\int_{\Omega}|u|^{p}dx \geq \frac{1}{2^{p-2}}\left(\frac{\pi_{p}}{\mathrm{diam}(\Omega)}\right)^{p}d(u, E_{Poin})^{p}.
\end{align}
Here, $E_{Poin}=\{cu_{1}: c\in \mathbb{R}\}$ is the manifold of optimizers (or eigenspace) of (\ref{poincare}),
\begin{align*}
\pi_{p}=2 \int_{0}^{+\infty} \frac{1}{1+\frac{1}{p-1} s^{p}} d s=2 \pi \frac{(p-1)^{1 / p}}{p(\sin (\pi / p))}
\end{align*}
and $d(u, E_{Poin})$ is the $L^{p}$ distance to the manifold of optimizers, i.e.
\begin{align*}
d(u, E_{Poin})^{p} := \inf_{c\in\mathbb{R}}\left\{\norm{u-cu_{1}}^{p}_{L^{p}(\Omega)}\right\}.
\end{align*}
\end{thm}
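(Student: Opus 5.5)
Starting from the identity of Theorem \ref{thm apseit}, the remainder equals $\int_\Omega C_p\bigl(\nabla u, u_1\nabla(u/u_1)\bigr)dx$. By Lemma \ref{lem1} and the lower bound $c_1(p)\ge 2^{2-p}$ of Proposition \ref{cp asymp}, this is at least
\begin{align*}
\frac{1}{2^{p-2}}\int_\Omega \Bigl|\nabla\Bigl(\frac{u}{u_1}\Bigr)\Bigr|^p u_1^p\,dx .
\end{align*}
Set $w:=u/u_1$ and $\omega:=u_1^p$. By Theorem \ref{thm saka}, $\log u_1$ is concave on the convex domain $\Omega$, so $\omega=e^{p\log u_1}$ is a positive log-concave weight on $\Omega$. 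This is exactly the setting of Theorem \ref{thm log-convex stab}.

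Next I apply the weighted Poincar\'e inequality. Theorem \ref{thm log-convex stab} requires the normalization $\int_\Omega |w|^{p-2}w\,\omega\,dx=0$, which generally fails. I therefore pick $t_0\in\mathbb{R}$ with
\begin{align*}
\int_\Omega |w-t_0|^{p-2}(w-t_0)\,\omega\,dx=0 .
\end{align*}
Such a $t_0$ exists because the map $t\mapsto\int|w-t|^{p-2}(w-t)\omega$ is continuous and strictly decreasing. It also tends to $\mp\infty$ as $t\to\pm\infty$, since $\omega$ is integrable and $w$ is bounded; see the obstacle below. Applying the theorem to $w-t_0$, whose gradient equals $\nabla w$, gives
\begin{align*}
\int_\Omega|\nabla w|^p\omega\,dx\ \ge\ \Bigl(\frac{\pi_p}{\mathrm{diam}(\Omega)}\Bigr)^p\inf_{t}\int_\Omega|w-t_0-t|^p u_1^p\,dx .
\end{align*}
The integrand on the right is $|u-(t_0+t)u_1|^p$, so the infimum is exactly $d(u,E_{Poin})^p$. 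Combining the two displays gives \eqref{stability}.

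The main obstacle is regularity. Theorem \ref{thm log-convex stab} is stated for Lipschitz $u$, but $w=u/u_1$ has $u_1\to0$ at $\partial\Omega$. However, $u\in C_0^\infty(\Omega)$ has compact support $K\Subset\Omega$, and $u_1\in C^{1,\beta}_{loc}$ is bounded below by a positive constant on $K$. Hence $w$ is Lipschitz on $\Omega$ and vanishes off $K$, so the Picone identity and the weighted inequality apply directly. I expect only to need care with the pointwise use of Theorem \ref{thm picone} on $K$, where $u_1>0$ and $u_1$ is differentiable. Outside $K$ both sides vanish.

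A minor point is that $\omega=u_1^p$ is log-concave but only continuous up to the boundary. If Theorem \ref{thm log-convex stab} needs a strictly positive weight on the closure, I would apply it on convex subdomains $\Omega_\varepsilon\supset K$ with $\mathrm{diam}(\Omega_\varepsilon)\le\mathrm{diam}(\Omega)$. The weighted integrals over $\Omega\setminus\Omega_\varepsilon$ involve only $t^p u_1^p$, so letting $\varepsilon\to0$ recovers the estimate.
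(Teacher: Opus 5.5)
Your proposal is correct and follows the paper's proof essentially step for step. The steps are: the identity of Theorem~\ref{thm apseit}, Lemma~\ref{lem1} together with $c_1(p)\ge 2^{2-p}$, log-concavity of $\omega=u_1^p$ via Theorem~\ref{thm saka}, a constant shift $t_0$ of $w=u/u_1$ to meet the normalization in Theorem~\ref{thm log-convex stab}, and the identification $\inf_t\int_\Omega|w-t|^p u_1^p\,dx=d(u,E_{Poin})^p$. The differences from the paper are cosmetic: you apply the bound on $c_1(p)$ at the start and invoke strict monotonicity in $t$, which is not needed. Your subdomain approximation is also unnecessary, because Theorem~\ref{thm log-convex stab} only requires $\omega>0$ on the open set $\Omega$, and this holds since $u_1>0$ there.
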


When $p=2$, in (\ref{stability}), interestingly the constant $\pi_{2}=\pi$, which implies:

\begin{cor}\label{cor 1}
Let $n\geq2$ and $\Omega\subset \mathbb{R}^n$ be an open bounded convex set having diameter $\mathrm{diam}(\Omega)$ with smooth boundary $\partial \Omega$. Then, for every real-valued $u\in C_{0}^{\infty}(\Omega)$, we have
\begin{align}\label{l2 stab}
\int_{\Omega}|\nabla u|^{2}dx-\lambda_{1}(2,\Omega)\int_{\Omega}|u|^{2}dx \geq \left(\frac{\pi}{\mathrm{diam}(\Omega)}\right)^{2}d(u, E_{Poin})^{2}.
\end{align}
\end{cor}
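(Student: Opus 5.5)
Corollary \ref{cor 1} is the case $p=2$ of Theorem \ref{thm stability poincare}, so the plan is simply to specialize (\ref{stability}) and evaluate the constants.

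\textbf{Step 1: the factor $2^{p-2}$.} Setting $p=2$ (with $n\geq 2$, as assumed) makes the hypotheses of Theorem \ref{thm stability poincare} exactly those of the corollary. The prefactor $\frac{1}{2^{p-2}}$ becomes $1$.

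\textbf{Step 2: the constant $\pi_2$.} Using the integral definition,
\begin{align*}
\pi_{2}=2\int_{0}^{+\infty}\frac{ds}{1+s^{2}}=2\cdot\frac{\pi}{2}=\pi .
\end{align*}
The closed form agrees: $2\pi\frac{(2-1)^{1/2}}{2\sin(\pi/2)}=\pi$.

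\textbf{Step 3: the distance term.} The distance becomes $d(u,E_{Poin})^{2}=\inf_{c\in\mathbb{R}}\norm{u-cu_{1}}_{L^{2}(\Omega)}^{2}$. Substituting these three values into (\ref{stability}) gives (\ref{l2 stab}).

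\textbf{Consistency check.} For $p=2$ one has $C_2(\xi,\eta)=|\eta|^2$ exactly, so the identity (\ref{general poincare}) reads
\begin{align*}
\int_\Omega |\nabla u|^2dx-\lambda_1\int_\Omega|u|^2dx=\int_\Omega u_1^2\left|\nabla\left(\frac{u}{u_1}\right)\right|^2dx .
\end{align*}
Here $\omega=u_1^2$ is log-concave by Theorem \ref{thm saka}. Applying Theorem \ref{thm log-convex stab} to $v=u/u_1$ bounds the right-hand side below by $\left(\frac{\pi}{\mathrm{diam}(\Omega)}\right)^2\inf_{t}\int_\Omega |u-tu_1|^2dx$. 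This matches the corollary and reflects the sharp constant $c_1(2)=1$ from Proposition \ref{cp asymp}.

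There is no real obstacle beyond these evaluations, since everything else is inherited from Theorem \ref{thm stability poincare}.
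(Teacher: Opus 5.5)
Your proposal is correct and matches the paper's argument exactly: the corollary is obtained by setting $p=2$ in Theorem \ref{thm stability poincare}, where $2^{p-2}=1$ and $\pi_2=\pi$. Your consistency check via $C_2(\xi,\eta)=|\eta|^2$ is also sound. Strictly, you should first subtract the $u_1^2$-weighted mean of $u/u_1$ to meet the zero-mean hypothesis of Theorem \ref{thm log-convex stab}, which is harmless because both sides are invariant under constant shifts.
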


\begin{rem}
The density argument can be applied to extend the inequalities (\ref{stability}) and (\ref{l2 stab}) to $u\in W_{0}^{1,p}(\Omega)$, which we will use for obtaining the fundamental gap of the $p$-Laplacian.
\end{rem}

The inequality (\ref{stability}) actually allows us to obtain a fundamental gap of the Dirichlet $p$-Laplacian on bounded convex domains with smooth boundary:

\begin{cor}\label{cor 2}
Let $p, n$ and $\Omega$ be from Theorem \ref{thm stability poincare} with normalized eigenfunctions, i.e. $\norm{u_{1}}_{L^{p}(\Omega)}=\norm{u_{2}}_{L^{p}(\Omega)}=1$. Then, for $\lambda_{1}(p,\Omega)$ and $\lambda_{2}(p,\Omega)$ defined as in (\ref{eigenvalue first})-(\ref{eigenvalue second}), we have
\begin{align}\label{gap p}
\lambda_{2}(p,\Omega)-\lambda_{1}(p,\Omega) \geq \frac{1}{2^{p-2}}\left(\frac{\pi_{p}}{\mathrm{diam}(\Omega)}\right)^{p}C(p,\Omega,u_{1},u_{2}),
\end{align}
where
\begin{align*}
C(p,\Omega,u_{1},u_{2}):=\inf_{c\in \mathbb{R}}\int_{\Omega}|u_{2}-cu_{1}|^{p}dx.
\end{align*}
\end{cor}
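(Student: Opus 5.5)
The plan is to apply the stability inequality (\ref{stability}), extended by density to $W_{0}^{1,p}(\Omega)$ as in the remark following Corollary \ref{cor 1}, with the second eigenfunction $u_{2}$ as the test function. For $p\ge 2$ we may assume $u_2$ is real-valued. It is a weak solution of (\ref{p-laplacian}) with $\lambda=\lambda_{2}(p,\Omega)$ and, being a Dirichlet eigenfunction, lies in $W_{0}^{1,p}(\Omega)$.

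First, test the weak formulation of $-\Delta_p u_2=\lambda_2|u_2|^{p-2}u_2$ against $u_2$ itself. This gives
\begin{align*}
\int_{\Omega}|\nabla u_{2}|^{p}dx=\lambda_{2}(p,\Omega)\int_{\Omega}|u_{2}|^{p}dx=\lambda_{2}(p,\Omega),
\end{align*}
using the normalization $\norm{u_{2}}_{L^{p}(\Omega)}=1$. Similarly, $\lambda_{1}(p,\Omega)\int_\Omega|u_2|^pdx=\lambda_1(p,\Omega)$. The left-hand side of (\ref{stability}) with $u=u_2$ therefore equals $\lambda_{2}(p,\Omega)-\lambda_{1}(p,\Omega)$.

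On the right-hand side, $d(u_{2},E_{Poin})^{p}=\inf_{c\in\mathbb{R}}\norm{u_{2}-cu_{1}}_{L^{p}(\Omega)}^{p}$, which is exactly $C(p,\Omega,u_1,u_2)$. This yields (\ref{gap p}). The normalization of $u_1$ does not affect the infimum, since $c$ ranges over all of $\mathbb{R}$; it is imposed only to match the characterization (\ref{eigenvalue second}).

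The only step needing care is the density extension of (\ref{stability}) from $C_0^\infty(\Omega)$ to $W_0^{1,p}(\Omega)$. Take $\varphi_k\in C_0^\infty(\Omega)$ with $\varphi_k\to u_2$ in $W^{1,p}$. Then both sides of (\ref{stability}) converge. The left side converges by continuity of the $L^p$ norms of $\varphi_k$ and $\nabla\varphi_k$. The right side converges because $v\mapsto d(v,E_{Poin})$ is $1$-Lipschitz in $L^p$: it is the distance to a fixed closed subspace. Hence (\ref{stability}) holds for $u_2$, and the corollary follows.
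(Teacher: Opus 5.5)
Your proposal is correct and follows the paper's proof. You extend (\ref{stability}) to $W_{0}^{1,p}(\Omega)$ by density, take $u=u_{2}$, and use $\int_{\Omega}|\nabla u_{2}|^{p}dx=\lambda_{2}(p,\Omega)$ together with $\|u_{2}\|_{L^{p}(\Omega)}=1$ so that the left side becomes $\lambda_{2}(p,\Omega)-\lambda_{1}(p,\Omega)$. Your explicit justifications — the $1$-Lipschitz continuity of $v\mapsto d(v,E_{Poin})$ in $L^{p}$ for the density step, and testing the weak equation with $u_{2}$ for the energy identity — fill in details the paper only asserts.
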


\begin{rem}
In the aforementioned survey \cite[Page 7]{dai2018fundamental}, Dai, Seto and Wei comment that spectral gaps for the Dirichlet $p$-Laplacian operator are \say{still unknown} and we believe it is the first such result in this direction.
\end{rem}

\begin{rem}
It is usually standard to assume the eigenfunctions to be normalized (see, for example, \cite{lindqvist1990equation, kawohl2006positive, audoux2018multiplicity, dasilva2019maximal}). 
\end{rem}

When $p=2$, in the inequality (\ref{gap p}), we have that $C(2,\Omega,u_{1},u_{2})=1$ and $\pi_{2}=\pi$, which gives us the fundamental gap of the Laplacian in the form of Yu, Zhong and Smits \cite{yu1986lower, smits1996spectral}.

\begin{cor}\label{cor 3}
Let $n$ and $\Omega$ be from Theorem \ref{thm stability poincare} with normalized eigenfunctions. Then, for $\lambda_{1}(2,\Omega)$ and $\lambda_{2}(2,\Omega)$ defined as in (\ref{eigenvalue first})-(\ref{eigenvalue second}), we have
\begin{align*}
\lambda_{2}(2, \Omega)-\lambda_{1}(2, \Omega)\geq \frac{\pi^{2}}{\mathrm{diam}(\Omega)^{2}}.
\end{align*}
\end{cor}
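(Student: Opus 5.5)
The plan is to specialize Corollary \ref{cor 2} to $p=2$, or equivalently to apply Corollary \ref{cor 1} directly to the second eigenfunction, and then to evaluate the distance term exactly. For $p=2$ the operator is the Dirichlet Laplacian. As recalled in Section \ref{sec prem}, the number $\lambda_2(2,\Omega)$ given by (\ref{eigenvalue second}), like the variational sequence (\ref{def eigen p}), coincides with the usual second Dirichlet eigenvalue. So I take $u_2\in W_0^{1,2}(\Omega)$ to be a real eigenfunction with $-\Delta u_2=\lambda_2(2,\Omega)u_2$ and $\norm{u_2}_{L^2(\Omega)}=1$, and $u_1>0$ to be the first eigenfunction with $\norm{u_1}_{L^2(\Omega)}=1$.

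\textbf{Step 1: the energy of $u_2$.} Testing the weak equation for $u_2$ against $u_2$ itself gives $\int_\Omega|\nabla u_2|^2dx=\lambda_2(2,\Omega)$. Since $u_2$ is only known to lie in $W_0^{1,2}(\Omega)$, I use the density remark following Corollary \ref{cor 1} to extend (\ref{l2 stab}) to all of $W_0^{1,2}(\Omega)$. For the extension I approximate $u_2$ by functions in $C_0^\infty(\Omega)$ and check that both sides of (\ref{l2 stab}) are continuous in the $W^{1,2}$ norm. For the distance term this holds because $c\mapsto\norm{u-cu_1}_{L^2}$ is $1$-Lipschitz in $u$, uniformly in $c$. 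Applying the extended (\ref{l2 stab}) to $u=u_2$ then gives
\begin{align*}
\lambda_2(2,\Omega)-\lambda_1(2,\Omega)\geq\left(\frac{\pi}{\mathrm{diam}(\Omega)}\right)^2\inf_{c\in\mathbb{R}}\int_\Omega|u_2-cu_1|^2dx .
\end{align*}

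\textbf{Step 2: the infimum equals $1$.} This is the only point needing care. Since $\lambda_1(2,\Omega)<\lambda_2(2,\Omega)$ and the Dirichlet Laplacian is symmetric, I test the equation for $u_1$ against $u_2$ and the equation for $u_2$ against $u_1$, and subtract. This gives $(\lambda_2-\lambda_1)\int_\Omega u_1u_2\,dx=0$, hence $\langle u_1,u_2\rangle=0$. Expanding the quadratic in $c$ with this orthogonality and the normalization,
\begin{align*}
\int_\Omega|u_2-cu_1|^2dx=1+c^2,
\end{align*}
which is minimized at $c=0$ with value $1$. Thus $C(2,\Omega,u_1,u_2)=1$.

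Combining the two steps yields $\lambda_2(2,\Omega)-\lambda_1(2,\Omega)\geq\pi^2/\mathrm{diam}(\Omega)^2$, which is the claim. The main obstacle is not the computation but the bookkeeping in Step 1: confirming that the min-max value (\ref{eigenvalue second}) for $p=2$ really is attained by an honest eigenfunction $u_2$, and that the density extension of (\ref{l2 stab}) is legitimate. The first I would quote from the cited identification of variational eigenvalues with the classical spectrum when $p=2$. The second is the routine continuity check described in Step 1.
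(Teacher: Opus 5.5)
Your proposal is correct and follows the paper's route. You apply the density-extended stability inequality (Corollary~\ref{cor 2}, equivalently Corollary~\ref{cor 1}) to the normalized second eigenfunction $u_2$ at $p=2$, using $\pi_2=\pi$ and $C(2,\Omega,u_1,u_2)=1$. The one difference is that you justify $C(2,\Omega,u_1,u_2)=1$ through the $L^2$-orthogonality of $u_1$ and $u_2$ and the expansion $\int_\Omega|u_2-cu_1|^2dx=1+c^2$, whereas the paper states this value without proof.
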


\begin{rem}
We remark that, as pointed out to us by V. Bobkov, from the inequality (\ref{takac ineq}), the stability of the $L^{p}$-Poincar\'e inequality in the form of Bianchi and Egnell can be obtained as follows:

\begin{multline}\label{takac egnell ineq}
\int_{\Omega} |\nabla u|^p dx - \lambda_1 \int_{\Omega} |u|^p dx \geq c\int_{\Omega} |\nabla u^{\perp}|^p dx = c\int_{\Omega}\left|\nabla\left(u-u^{\parallel}u_{1}\right)\right|^{p}dx \\\geq c\inf_{U\in E_{Poin}}\int_{\Omega}\left|\nabla\left(u-U\right)\right|^{p}dx,
\end{multline}
where $E_{Poin}$ is the eigenspace. However, the constant, in (\ref{takac egnell ineq}), is not explicit. Here, in this paper, we will demonstrate a different approach, which allows us to obtain a stability result with an explicit constant. 
\end{rem}

Next, we show some corresponding results for the Gaussian probability measure.

\begin{thm}\label{thm1}
Let $1<p<\infty$, $\Omega\subset \mathbb{R}^n$ be an open bounded set. Then, for all complex-valued $u\in C^{\infty}_{0}(\Omega)$, we have
\begin{align}\label{general poincare gauss}
\int_{\Omega}|\nabla u|^{p}d\gamma-\lambda_1(p, \Omega,\gamma)\int_{\Omega}|u|^{p}d\gamma=\int_{\Omega}C_{p}\left(\nabla u,u^{\gamma}_{1}\nabla\left(\frac{u}{u^{\gamma}_{1}}\right)\right)d\gamma.
\end{align}
Moreover, the remainder term vanishes if and only if $u=cu^{\gamma}_{1}$ for all $c\in \mathbb{C}$.
\end{thm}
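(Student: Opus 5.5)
We apply the Picone-type identity of Theorem \ref{thm picone} pointwise with $\phi=u^{\gamma}_{1}$, integrate against $d\gamma$, and then use the weak form of the Gaussian eigenvalue problem (\ref{gauss p-lap}) to identify the boundary-free term with $\lambda_1(p,\Omega,\gamma)\int_\Omega|u|^pd\gamma$. This is the same scheme that gives Theorem \ref{thm apseit}, with $dx$ replaced by $d\gamma$ and $\operatorname{div}$ replaced by $\operatorname{div}_\gamma$.

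\emph{Step 1 (pointwise identity).} Since $u^{\gamma}_{1}>0$ in $\Omega$ and $u^{\gamma}_1\in C^{1,\alpha}_{loc}(\Omega)$ by Proposition \ref{prop regularity gaussian}, we may take $\phi=u_1^\gamma$. Then $\eta=\nabla u-\frac{\nabla u^\gamma_1}{u^\gamma_1}u=u^\gamma_1\nabla(u/u^\gamma_1)$, and Theorem \ref{thm picone} gives at every point of $\Omega$
\begin{align*}
C_p\left(\nabla u,u^{\gamma}_1\nabla\left(\frac{u}{u^\gamma_1}\right)\right)=|\nabla u|^p-|\nabla u^\gamma_1|^{p-2}\left\langle\nabla\left(\frac{|u|^p}{(u^\gamma_1)^{p-1}}\right),\nabla u^\gamma_1\right\rangle .
\end{align*}

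\emph{Step 2 (integration).} Multiply by the Gaussian density and integrate over $\Omega$. The function $v:=|u|^p/(u^\gamma_1)^{p-1}$ has compact support in $\Omega$ because $u\in C_0^\infty(\Omega)$. On $\operatorname{supp}u$, $u_1^\gamma$ is bounded below by a positive constant, so $v$ is Lipschitz there and lies in $W^{1,p}_0(\Omega,\gamma)$. We therefore use $v$ as a test function in the weak formulation of (\ref{gauss p-lap}), which reads $\int_\Omega|\nabla u_1^\gamma|^{p-2}\langle\nabla u^\gamma_1,\nabla v\rangle d\gamma=\lambda_1(p,\Omega,\gamma)\int_\Omega (u^\gamma_1)^{p-1}v\,d\gamma$. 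This is the integration by parts formula for $\Delta_{p,\gamma}$ with no boundary term. The right-hand side equals $\lambda_1(p,\Omega,\gamma)\int_\Omega|u|^pd\gamma$, which yields (\ref{general poincare gauss}).

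The main technical point is justifying $v$ as a test function. Away from $\partial\Omega$ no regularity of the domain is needed, but for complex $u$ the function $|u|^p$ is only $C^1$ when $p>1$. It has gradient $p|u|^{p-2}\mathrm{Re}(\bar u\nabla u)$, which is continuous, so $v\in C^1_c(\Omega)$ and the test is legitimate. An alternative is to replace $|u|$ by $(|u|^2+\varepsilon^2)^{1/2}$ and pass to the limit by dominated convergence.

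\emph{Step 3 (equality case).} Since $C_p\ge0$ and the integrand is continuous, vanishing of the remainder forces $C_p(\xi,\eta)=0$ on $\Omega$. By Lemma \ref{lem1} for $p\ge2$, and Lemma \ref{lem2} for $1<p<2$, this gives $\eta=0$ wherever $|\xi|+|\xi-\eta|\neq0$. At points where $\xi=0$ and $\xi-\eta=0$ we have $\eta=0$ directly. So $\nabla(u/u^\gamma_1)=0$ in the connected domain $\Omega$, and $u=cu^\gamma_1$. The converse is immediate because $\eta\equiv0$ and $C_p(\xi,0)=0$. If $\Omega$ is not connected, we argue componentwise in the same way as in Theorem \ref{thm apseit}. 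Lemma \ref{lem2} is stated under the assumption $n\ge2$; for $n=1$ one checks strict positivity of $C_p(\xi,\eta)$ for $\eta\ne0$ directly from the strict convexity of $|\cdot|^p$.
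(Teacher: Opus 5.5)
Your proof is correct and follows the paper's argument: apply the Picone identity of Theorem~\ref{thm picone} pointwise with $\phi=u_1^\gamma$, integrate against $d\gamma$, and remove the cross term using the eigenvalue equation. The paper does this via $\operatorname{div}_\gamma X$ and a pointwise $\Delta_{p,\gamma}u_1^\gamma$; your test of the weak formulation with $|u|^p/(u_1^\gamma)^{p-1}\in C^1_c(\Omega)$ is the more careful version, since $u_1^\gamma$ is only $C^{1,\alpha}_{loc}$, and your Step 3 supplies the equality case the paper only asserts. The one thing to drop is the ``componentwise'' remark: on a disconnected $\Omega$ the first eigenfunction can vanish identically on whole components, so the identity genuinely needs $u_1^\gamma>0$ throughout $\Omega$ (i.e.\ connectedness), which both you and the paper tacitly assume.
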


\begin{rem}
The identity (\ref{general poincare gauss}) implies that all optimizers of the inequality
\begin{align*}
\int_{\Omega}|\nabla u|^{p}d\gamma \geq \lambda_{1}(p,\Omega, \gamma)\int_{\Omega}|u|^{p}d\gamma
\end{align*}
are in the set that we denote $E^{\gamma}_{Poin}:=\{cu^{\gamma}_{1}:c\in \mathbb{C}\}$.
\end{rem}

By the same argument as in the proof of Theorem \ref{thm stability poincare}, we obtain the stability of the $L^{p}$-Poincar\'e inequality for Gaussian probability measures.

\begin{thm}\label{thm stability gauss}
Let $p, n\geq 2$ and $\Omega\subset\mathbb{R}^n$ be an open bounded convex set having the diameter $\mathrm{diam}(\Omega)$. Then, for every real-valued $u\in C_{0}^{\infty}(\Omega)$, we have
\begin{align}\label{stability gauss}
\int_{\Omega}|\nabla u|^{p}d\gamma - \lambda_{1}(p,\Omega,\gamma)\int_{\Omega}|u|^{p}d\gamma \geq \frac{1}{2^{p-2}}\left(\frac{\pi_{p}}{\mathrm{diam}(\Omega)}\right)^{p}d(u,E_{Poin}^{\gamma})^{p},
\end{align}
where
\begin{align*}
\pi_{p}=2 \int_{0}^{+\infty} \frac{1}{1+\frac{1}{p-1} s^{p}} d s=2 \pi \frac{(p-1)^{1 / p}}{p(\sin (\pi / p))}
\end{align*}
and $d(u, E^{\gamma}_{Poin})$ is the $L^{p}$ distance to the manifold of optimizers, i.e.
\begin{align*}
d(u, E^{\gamma}_{Poin})^{p} := \inf_{c\in\mathbb{R}}\left\{\norm{u-cu^{\gamma}_{1}}^{p}_{L^{p}(\Omega, \gamma)}\right\}.
\end{align*}
\end{thm}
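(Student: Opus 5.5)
Starting from the Gaussian Picone-type identity (\ref{general poincare gauss}) of Theorem \ref{thm1}, I bound the remainder from below in three steps. Write $v := u/u^{\gamma}_{1}$ with $u^{\gamma}_{1}>0$ the first Gaussian eigenfunction. Then $\eta = u^{\gamma}_{1}\nabla v$ in the $C_{p}$-functional.

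\textbf{Step 1: pointwise lower bound.} Lemma \ref{lem1} together with the lower bound of Proposition \ref{cp asymp} ($c_{1}(p)\geq 2^{2-p}$ for $p\geq2$) gives
\begin{align*}
\int_{\Omega}|\nabla u|^{p}d\gamma-\lambda_1(p,\Omega,\gamma)\int_{\Omega}|u|^{p}d\gamma \geq \frac{1}{2^{p-2}}\int_{\Omega}|\nabla v|^{p}\,(u^{\gamma}_{1})^{p}e^{-\frac{|x|^{2}}{2}}(2\pi)^{-\frac n2}dx .
\end{align*}

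\textbf{Step 2: log-concavity of the weight.} Set $\omega := (2\pi)^{-n/2}(u^{\gamma}_{1})^{p}e^{-|x|^{2}/2}$. By Theorem \ref{thm convex gauss}, $\log u^{\gamma}_{1}$ is concave on the convex set $\Omega$. Since $-|x|^{2}/2$ is concave, $\log\omega$ is a sum of concave functions, so $\omega$ is positive and log-concave on $\Omega$.

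\textbf{Step 3: weighted Poincar\'e inequality.} Apply Theorem \ref{thm log-convex stab} to $w := v - t_{0}$, where $t_{0}$ is chosen so that $\int_{\Omega}|v-t_{0}|^{p-2}(v-t_{0})\,\omega\,dx=0$. Such a $t_{0}$ exists because $t\mapsto\int|v-t|^{p-2}(v-t)\omega\,dx$ is continuous and strictly decreasing, with opposite signs at $\pm\infty$. Since $\nabla w=\nabla v$, this yields
\begin{align*}
\int_{\Omega}|\nabla v|^{p}\omega\,dx\geq \left(\frac{\pi_{p}}{\mathrm{diam}(\Omega)}\right)^{p}\inf_{t\in\mathbb{R}}\int_{\Omega}|v-t|^{p}\omega\,dx .
\end{align*}
Finally, $|v-t|^{p}\omega=|u-tu^{\gamma}_{1}|^{p}(2\pi)^{-n/2}e^{-|x|^{2}/2}$, so the infimum is exactly $d(u,E^{\gamma}_{Poin})^{p}$. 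Combining Steps 1--3 gives (\ref{stability gauss}).

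\textbf{Main obstacle: justifying Step 3.} Theorem \ref{thm log-convex stab} is stated for Lipschitz functions, but $v=u/u^{\gamma}_{1}$ need not be Lipschitz up to $\partial\Omega$. Indeed, $u$ has compact support, while $u^{\gamma}_{1}>0$ in $\Omega$ is only $C^{1,\alpha}_{\rm loc}$ by Proposition \ref{prop regularity gaussian}. The plan for this is as follows.
\begin{itemize}[label={}]
\item Since $\operatorname{supp}u$ is a compact $K\subset\Omega$, $u^{\gamma}_{1}$ is bounded below on $K$ and $C^{1}$ near $K$. Hence $v$ is Lipschitz on $\Omega$: it vanishes outside $K$ and is $C^{1}$ inside.
\item No regularity of $\partial\Omega$ is needed here, since Theorem \ref{thm convex gauss} requires none. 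This is why smoothness is dropped compared with Theorem \ref{thm stability poincare}.
\item The weight $\omega$ is only log-concave, not bounded away from zero, and degenerates at $\partial\Omega$. If Theorem \ref{thm log-convex stab} demands positivity up to the boundary, I would apply it on the inner convex sets $\Omega_{\varepsilon}=\{x\in\Omega: u^{\gamma}_{1}>\varepsilon\}$, which are convex by log-concavity, contain $K$ for small $\varepsilon$, and satisfy $\mathrm{diam}(\Omega_{\varepsilon})\leq\mathrm{diam}(\Omega)$. I would then let $\varepsilon\to0$ via monotone convergence.
\end{itemize}

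Step 1 also uses that $u$ is real-valued: Lemma \ref{lem1} is applied pointwise, including where $\nabla u^{\gamma}_{1}=0$. There $C_{p}(\xi,\eta)$ is still defined and nonnegative, so nothing breaks.
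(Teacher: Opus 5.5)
Your proposal is correct and follows essentially the same route as the paper: the Gaussian Picone identity (Theorem \ref{thm1}), the pointwise bound of Lemma \ref{lem1} with $c_{1}(p)\geq 2^{2-p}$, log-concavity of $\omega=(2\pi)^{-n/2}(u^{\gamma}_{1})^{p}e^{-|x|^{2}/2}$ via Theorem \ref{thm convex gauss}, and Theorem \ref{thm log-convex stab} applied to the shifted quotient $u/u^{\gamma}_{1}-t_{0}$. Your compact-support argument for the Lipschitz property is the one the paper spells out in the Lebesgue case, and the $\Omega_{\varepsilon}$ fallback is unnecessary, since Theorem \ref{thm log-convex stab} only requires $\omega>0$ on the open set $\Omega$.
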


\begin{rem}
As we noted in Section \ref{sec prem}, Theorem \ref{thm convex gauss} of Colesanti, Qin and Salani does not require the boundary of the domain $\partial \Omega$ to be smooth. Thus, in the case of Gaussian probability measure, Theorem \ref{thm stability gauss} does not assume $\partial \Omega$ to be smooth.
\end{rem}

In the same spirit as in Corollary \ref{cor 1}, for $p=2$ in (\ref{stability gauss}), we have

\begin{cor}\label{cor 1 gauss}
Let $n\geq2$ and $\Omega\subset\mathbb{R}^n$ be an open bounded convex set having the diameter $\mathrm{diam}(\Omega)$. Then, for every real-valued $u\in C_{0}^{\infty}(\Omega)$, we have
\begin{align*}
\int_{\Omega}|\nabla u|^{2}d\gamma-\lambda_{1}(2,\Omega, \gamma)\int_{\Omega}|u|^{2}d\gamma \geq \left(\frac{\pi}{\mathrm{diam}(\Omega)}\right)^{2}d(u, E^{\gamma}_{Poin})^{2}.
\end{align*}
\end{cor}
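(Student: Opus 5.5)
Corollary \ref{cor 1 gauss} is the case $p=2$ of Theorem \ref{thm stability gauss}, so the plan is to substitute $p=2$ there and simplify the constant. With $p=2$ we have $2^{p-2}=1$. Also $\pi_{2}=2\int_{0}^{\infty}\frac{ds}{1+s^{2}}=\pi$, which the closed form $2\pi\frac{1}{2\sin(\pi/2)}=\pi$ confirms. The right-hand side of (\ref{stability gauss}) therefore becomes $\left(\frac{\pi}{\mathrm{diam}(\Omega)}\right)^{2}d(u,E^{\gamma}_{Poin})^{2}$, which is exactly the claim.

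For completeness, I would also recall how Theorem \ref{thm stability gauss} itself should go in this case, since the corollary inherits its proof. Assume $u$ is real-valued and let $u^{\gamma}_{1}>0$ be the first Gaussian eigenfunction. Start from the identity (\ref{general poincare gauss}) of Theorem \ref{thm1}. For $p=2$, $C_{2}(\xi,\eta)=|\eta|^{2}$ exactly, so $c_{1}(2)=1$, consistent with Proposition \ref{cp asymp}. This gives
\begin{align*}
\int_{\Omega}|\nabla u|^{2}d\gamma-\lambda_{1}(2,\Omega,\gamma)\int_{\Omega}|u|^{2}d\gamma=\int_{\Omega}\left|\nabla\left(\frac{u}{u^{\gamma}_{1}}\right)\right|^{2}(u^{\gamma}_{1})^{2}d\gamma .
\end{align*}
Set $w=u/u^{\gamma}_{1}$ and let the weight be $\omega=(u^{\gamma}_{1})^{2}(2\pi)^{-n/2}e^{-|x|^{2}/2}$. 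By Theorem \ref{thm convex gauss}, $\log u^{\gamma}_{1}$ is concave, so $\omega$ is log-concave on the convex set $\Omega$.

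Next, apply Theorem \ref{thm log-convex stab} to $w-t^{*}$, where $t^{*}$ is chosen so that $\int_{\Omega}(w-t^{*})\omega\,dx=0$; for $p=2$ this is just the weighted mean of $w$. This yields
\begin{align*}
\int_{\Omega}|\nabla w|^{2}\omega\,dx\geq\left(\frac{\pi}{\mathrm{diam}(\Omega)}\right)^{2}\inf_{t}\int_{\Omega}|w-t|^{2}\omega\,dx .
\end{align*}
Since $\int_{\Omega}|w-t|^{2}\omega\,dx=\|u-tu^{\gamma}_{1}\|^{2}_{L^{2}(\Omega,\gamma)}$, the right-hand side equals $\left(\frac{\pi}{\mathrm{diam}(\Omega)}\right)^{2}d(u,E^{\gamma}_{Poin})^{2}$.

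The main obstacle is applying Theorem \ref{thm log-convex stab}, which requires $w$ to be Lipschitz and $\omega$ to be positive on $\Omega$. Because $u$ has compact support in $\Omega$ and $u^{\gamma}_{1}>0$ is $C^{1,\alpha}_{loc}$ by Proposition \ref{prop regularity gaussian}, the quotient $w$ is Lipschitz on $\Omega$ and $\omega$ is positive in the interior, so no boundary degeneracy arises. With that checked, the corollary follows by direct specialization.
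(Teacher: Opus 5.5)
Your proposal is correct and matches the paper, which obtains the corollary by setting $p=2$ in Theorem \ref{thm stability gauss}, so that $2^{p-2}=1$ and $\pi_{2}=\pi$. Your sketch of the $p=2$ case of that theorem follows the paper's own argument: the identity of Theorem \ref{thm1}, log-concavity of $(u^{\gamma}_{1})^{2}$ times the Gaussian density, and Theorem \ref{thm log-convex stab}. The one difference is a harmless sharpening, since for $p=2$ you use the exact equality $C_{2}(\xi,\eta)=|\eta|^{2}$ where the paper uses the lower bound from Lemma \ref{lem1}.
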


By density of $C_{0}^{\infty}(\Omega)$ in $W_{0}^{1,2}(\Omega, \gamma)$, Corollary \ref{cor 1 gauss} extends to all $u\in W_{0}^{1,2}(\Omega, \gamma)$. Thus, by the identical argument as for the proof of Corollary \ref{cor 2}, we are able to obtain a spectral gap estimate in Gaussian spaces.

\begin{cor}\label{cor 333}
Let $n$ and $\Omega$ be from Corollary \ref{cor 1 gauss} with $L^{2}(\Omega, \gamma)$-normalized eigenfunctions. Then for $\lambda_{1}(\Omega, \gamma)$ and $\lambda_{2}(\Omega, \gamma)$ defined as in (\ref{def eigenvalues gauss}), we have
\begin{align}\label{gap gauss}
\lambda_{2}(\Omega, \gamma)-\lambda_{1}(\Omega, \gamma) \geq \frac{\pi^{2}}{\mathrm{diam}(\Omega)^{2}}.
\end{align}
\end{cor}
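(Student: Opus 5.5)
The plan is to apply Corollary \ref{cor 1 gauss}, extended by density to all of $W_{0}^{1,2}(\Omega,\gamma)$ as remarked just before the statement, to the test function $u=u_{2}^{\gamma}$. Here $u_{2}^{\gamma}$ denotes an $L^{2}(\Omega,\gamma)$-normalized eigenfunction of the Ornstein--Uhlenbeck operator $L_{\gamma}$ associated with $\lambda_{2}(\Omega,\gamma)$. We take $u_{1}^{\gamma}>0$ normalized in $L^{2}(\Omega,\gamma)$. Since $\Omega$ is bounded, the weight $e^{-|x|^{2}/2}$ is bounded above and below, so $W_{0}^{1,2}(\Omega,\gamma)=W_{0}^{1,2}(\Omega)$ embeds compactly into $L^{2}(\Omega,\gamma)$. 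The quadratic form $\int_{\Omega}\langle\nabla u,\nabla v\rangle\,d\gamma$ is symmetric with respect to $L^{2}(\Omega,\gamma)$. By standard spectral theory, the variational numbers (\ref{def eigenvalues gauss}) are therefore attained by eigenfunctions $u_{1}^{\gamma},u_{2}^{\gamma}\in W_{0}^{1,2}(\Omega,\gamma)$ satisfying the weak equations
\begin{align*}
\int_{\Omega}\langle\nabla u_{i}^{\gamma},\nabla v\rangle\, d\gamma=\lambda_{i}(\Omega,\gamma)\int_{\Omega}u_{i}^{\gamma}v\, d\gamma, \qquad v\in W_{0}^{1,2}(\Omega,\gamma),\ i=1,2.
\end{align*}
Testing with $v=u_{i}^{\gamma}$ gives $\int_{\Omega}|\nabla u_{2}^{\gamma}|^{2}d\gamma=\lambda_{2}(\Omega,\gamma)$. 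Hence the left-hand side of Corollary \ref{cor 1 gauss} for $u=u_{2}^{\gamma}$ is exactly $\lambda_{2}(\Omega,\gamma)-\lambda_{1}(\Omega,\gamma)$.

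It remains to compute the distance term, which should equal $1$. First I would show $\langle u_{1}^{\gamma},u_{2}^{\gamma}\rangle_{L^{2}(\Omega,\gamma)}=0$. Testing the equation for $u_{1}^{\gamma}$ with $u_{2}^{\gamma}$, and vice versa, gives
\begin{align*}
\bigl(\lambda_{2}(\Omega,\gamma)-\lambda_{1}(\Omega,\gamma)\bigr)\int_{\Omega}u_{1}^{\gamma}u_{2}^{\gamma}\,d\gamma=0 .
\end{align*}
If $\lambda_{2}>\lambda_{1}$, orthogonality follows at once. If instead $\lambda_{2}=\lambda_{1}$, then $u_{2}^{\gamma}$ lies in the first eigenspace, which is one-dimensional (simplicity of the first eigenvalue, recalled in Section \ref{sec prem}; equivalently, the equality case of Theorem \ref{thm1}). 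The two-dimensional min-max space is then spanned by $u_{1}^{\gamma}$ and a function orthogonal to it, so we may choose $u_{2}^{\gamma}$ orthogonal to $u_{1}^{\gamma}$. In either case $u_{2}^{\gamma}\perp u_{1}^{\gamma}$.

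Given orthogonality, for every $c\in\mathbb{R}$ we have
\begin{align*}
\norm{u_{2}^{\gamma}-cu_{1}^{\gamma}}^{2}_{L^{2}(\Omega,\gamma)}=1+c^{2}\geq 1,
\end{align*}
with equality at $c=0$. Thus $d(u_{2}^{\gamma},E^{\gamma}_{Poin})^{2}=1$, and Corollary \ref{cor 1 gauss} yields (\ref{gap gauss}).

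The only delicate point is the density extension. Corollary \ref{cor 1 gauss} is stated for $C_{0}^{\infty}(\Omega)$, and $u_{2}^{\gamma}$ need not be smooth up to the boundary. I would take $\varphi_{k}\in C_{0}^{\infty}(\Omega)$ with $\varphi_{k}\to u_{2}^{\gamma}$ in $W^{1,2}(\Omega,\gamma)$ and pass to the limit on both sides. The left side is continuous in the $W^{1,2}$ norm. The right side is continuous because $u\mapsto\inf_{c}\norm{u-cu_{1}^{\gamma}}_{L^{2}(\Omega,\gamma)}$ is $1$-Lipschitz in $L^{2}(\Omega,\gamma)$, being the distance to a fixed subspace.
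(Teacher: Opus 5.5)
Your proposal is correct and follows the paper's route: extend Corollary~\ref{cor 1 gauss} by density to $W_{0}^{1,2}(\Omega,\gamma)$, test it with $u=u_{2}^{\gamma}$, and use $L^{2}(\Omega,\gamma)$-orthogonality of $u_{1}^{\gamma}$ and $u_{2}^{\gamma}$ to get $d(u_{2}^{\gamma},E^{\gamma}_{Poin})^{2}=1$, which the paper only asserts (via $C(2,\Omega,u_{1},u_{2})=1$ in the Lebesgue case). One small fix: $\Omega$ is convex, hence connected, so simplicity of the first eigenvalue already forces $\lambda_{2}(\Omega,\gamma)>\lambda_{1}(\Omega,\gamma)$, and your case $\lambda_{2}=\lambda_{1}$ is vacuous rather than something to handle by re-choosing $u_{2}^{\gamma}$ (as written it is self-contradictory, since a $u_{2}^{\gamma}$ lying in a one-dimensional first eigenspace would equal $\pm u_{1}^{\gamma}$).
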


\begin{rem}
We note that the optimal spectral gap $\frac{3\pi^{2}}{\mathrm{diam}(\Omega)^{2}}$ in Gaussian spaces has been shown in a very recent paper \cite{sun2025sharp} by Sun and Wang. Although our approach gives a suboptimal constant, it gives a constructive way of generalizing such results for any $p\geq 2$. Indeed, if the second eigenvalue $\lambda_{2}(p, \Omega, \gamma)$ of the Gaussian $p$-Laplacian can be defined similarly as in the Lebesgue setting, then an analogous spectral gap estimate would take the following form:
\begin{align*}
\lambda_{2}(p,\Omega, \gamma)-\lambda_{1}(p,\Omega, \gamma) \geq \frac{1}{2^{p-2}}\left(\frac{\pi_{p}}{\mathrm{diam}(\Omega)}\right)^{p}C(p,\Omega,u_{1},u_{2}, \gamma),
\end{align*}
where
\begin{align*}
C(p,\Omega,u_{1},u_{2}, \gamma):=\inf_{c\in \mathbb{R}}\int_{\Omega}|u_{2}-cu_{1}|^{p}d\gamma.
\end{align*}
\end{rem}

\section{Proof of Proposition \ref{cp asymp}}\label{sec proof prop}
\begin{proof}[Proof of Proposition \ref{cp asymp}]
Let us recall that
\begin{align}\label{cp proof formula}
c_{1}(p)=(p-1)(r_{0}+1)^{2-p},
\end{align}
where $r_{0}$ is the root of the function
\begin{align*}
f(r)=r^{p-1}-(p-1)r-(p-2).
\end{align*}
When $p=2$, we have $c_{1}(2)=1$. When $p>2$, we first take the derivative 
\begin{align*}
f'(r)=(p-1)(r^{p-2}-1).
\end{align*}
For any $p>2$, we see that $f'(r)=0$ at $r=1$. In addition, for any $p>2$ and $r>1$, one has $f'(r)>0$. Also note that $f(1)=4-2p$ starts at a negative value and becomes positive for large $r$. By continuity of $f(r)$, there exists at least one root $r_{0}>1$. Now since $f(r)$ is strictly increasing on $r\in (1,\infty)$, by the Intermediate Value Theorem, the root is unique and satisfies $r_{0}>1$.

Since $r_{0}> 1$, we have $r_{0}+1> 2$ and consequently $(r_{0}+1)^{p-2}> 2^{p-2}$. Therefore, from (\ref{cp proof formula}), we obtain that for $p>2$:
\begin{align*}
c_{1}(p)< \frac{p-1}{2^{p-2}}.
\end{align*}
Combining $p=2$ and $p>2$ cases, we get that
\begin{align*}
c_{1}(p)\leq \frac{p-1}{2^{p-2}}.
\end{align*}

To get a lower bound, we construct a new function $g(t)=t^{q}$, which is convex for any $q\geq 1$ and $t>0$. Since $g(t)$ is convex, it is also midpoint convex:
\begin{align*}
g\left(\frac{r_{0}+1}{2}\right)\leq \frac{g(r_{0})+g(1)}{2},
\end{align*}
which is equivalent to
\begin{align*}
\left(\frac{r_{0}+1}{2}\right)^{q}\leq\frac{r^{q}_{0}+1}{2}.
\end{align*}
Applying the inequality for $q=p-1$ ($p\geq 2$) and multiplying both sides by $2^{p-1}$, gives us
\begin{align*}
(r_{0}+1)^{p-1}\leq(r^{p-1}_{0}+1)2^{p-2}.
\end{align*}
Since $r^{p-1}_0+1=(r_{0}+1)(p-1)$, we get
\begin{align*}
(r_{0}+1)^{p-1}\leq 2^{p-2}(r_{0}+1)(p-1).
\end{align*}
Dividing both sides by $r_0+1$, we obtain
\begin{align*}
(r_{0}+1)^{p-2}\leq 2^{p-2}(p-1).
\end{align*}
Therefore, from (\ref{cp proof formula}), for $p\geq 2$, we have 
\begin{align*}
c_{1}(p)\geq \frac{1}{2^{p-2}}.
\end{align*}
By the Squeeze Theorem, it follows that $c_{1}(p) \to 0$ as $p\to \infty$.
\end{proof}

\section{Proofs of Theorem \ref{thm stability poincare} and Corollary \ref{cor 2}: Lebesgue measure}\label{sec proofs lebesgue}

\begin{proof}[\textbf{Proof of Theorem \ref{thm stability poincare}}]
Let $\Omega$ be a bounded convex domain of $\mathbb{R}^n$ ($n\geq 2$). Due to Theorem \ref{thm apseit}, we have that
\begin{align*}
\int_{\Omega}|\nabla u|^{p}dx-\lambda_1(p, \Omega)\int_{\Omega}|u|^{p}dx=\int_{\Omega}C_{p}\left(\nabla u,u_{1}\nabla\left(\frac{u}{u_{1}}\right)\right)dx
\end{align*}
for all $u\in C_{0}^{\infty}(\Omega)$. Using Lemma \ref{lem1}, we have that for $p\geq 2$,
\begin{align}\label{cp step}
\int_{\Omega}|\nabla u|^{p}dx-\lambda_1(p, \Omega)\int_{\Omega}|u|^{p}dx\geq c_{1}(p)\int_{\Omega}\left|\nabla\left(\frac{u}{u_{1}}\right)\right|^{p}|u_{1}|^{p}dx.
\end{align}
Let us define a function
\begin{align*}
f:=\frac{u}{u_{1}}.
\end{align*}
The right hand side of (\ref{cp step}) becomes
\begin{align*}
c_{1}(p)\int_{\Omega}\left|\nabla f\right|^{p}|u_{1}|^{p}dx.
\end{align*}
Now, at this step, we would like to use Theorem \ref{thm log-convex stab}. To do so, we need to show that $f$ is Lipschitz continuous. Since $u\in C_{0}^{\infty}(\Omega)$ and $u_{1}>0$ in $\Omega$, the function $u_{1}$ is bounded away from zero on a neighborhood of $\text{supp}\ u$. Hence, $f$ is well-defined and Lipschitz (see, also, Section \ref{sec prem}). For convenience, let us denote $\omega=|u_{1}|^{p}$. Since Theorem \ref{thm log-convex stab} operates on log-concave measures, it remains to verify that $|u_{1}|^{p}$ is log-concave. By Theorem \ref{thm saka}, the first eigenfunction satisfies  that $\log u_{1}$ is concave. Observing that
\begin{align*}
\log |u_{1}|^{p} = p\log u_{1},
\end{align*}
we conclude that $|u_{1}|^{p}$ is also log-concave. The last condition which needs to be satisfied is
\begin{align}\label{condition}
\int_{\Omega}|f|^{p-2}f\omega dx = 0.
\end{align}
However, this is not true in general. That is why we introduce a new function
\begin{align*}
g(t):=\int_{\Omega}|f-t|^{p-2}(f-t)\omega dx.
\end{align*}
By the Dominated Convergence Theorem, we have the continuity of $g(t)$ with respect to $t$. Since $f$ is bounded, we have that
\begin{align*}
m\leq f \leq M,
\end{align*}
where $m=\inf f$ and $M = \sup f$. Let us choose any $t<m$. Then,
\begin{align*}
g(t)\geq(m-t)^{p-1}\int_{\Omega}\omega dx > 0,
\end{align*}
which implies that $g$ attains a positive value. If $t>M$, then
\begin{align*}
g(t)\leq -(t-M)^{p-1}\int_{\Omega}\omega dx < 0,
\end{align*}
implying that $g$ also attains a negative value. Thus, since $g$ attains both negative and positive values and is continuous, by the Intermediate Value Theorem, there exists a constant $t_{0}$ such that $g(t_0)=0$. This means
\begin{align*}
\int_{\Omega}|f-t_{0}|^{p-2}(f-t_{0})\omega dx = 0.
\end{align*}
Let us define
\begin{align*}
\tilde{f}:=f-t_{0}.
\end{align*}
Since $\tilde{f}$ is Lipschitz continuous and it satisfies the condition (\ref{condition}), we can finally apply Theorem \ref{thm log-convex stab}:
\begin{align*}
c_{1}(p)\int_{\Omega}|\nabla \tilde{f}|^{p}|u_{1}|^{p}dx \geq c_{1}(p)C_{\Omega, p, \omega}\inf_{t\in\mathbb{R}}\int_{\Omega}|\tilde{f}-t|^{p}|u_{1}|^{p}dx.
\end{align*}
Making use of the identities, $\nabla \tilde{f} = \nabla f$ and $\tilde{f}-t:=f-c$, we get
\begin{align*}
c_{1}(p)\int_{\Omega}|\nabla f|^{p}|u_{1}|^{p}dx \geq c_{1}(p)C_{\Omega, p, \omega}\inf_{c\in\mathbb{R}}\int_{\Omega}|f-c|^{p}|u_{1}|^{p}dx.
\end{align*}
Substituting back, we obtain
\begin{align*}
c_{1}(p)\int_{\Omega}\left|\nabla\left(\frac{u}{u_{1}}\right)\right|^{p}|u_{1}|^{p}dx& \geq c_{1}(p)C_{\Omega, p, \omega}\inf_{c\in\mathbb{R}}\int_{\Omega}\left|\frac{u}{u_{1}}-c\right|^{p}|u_{1}|^{p}dx \nonumber
\\&= c_{1}(p)C_{\Omega, p, \omega}\inf_{c\in\mathbb{R}}\int_{\Omega}\left|u-cu_{1}\right|^{p}dx.
\end{align*}
In combination with (\ref{cp step}), we get
\begin{align*}
\int_{\Omega}|\nabla u|^{p}dx-\lambda_1(p, \Omega)\int_{\Omega}|u|^{p}dx\geq c_{1}(p)C_{\Omega,p,\omega}\inf_{c\in\mathbb{R}}\int_{\Omega}\left|u-cu_{1}\right|^{p}dx.
\end{align*}
Finally, applying the bounds on the constants $c_{1}(p)$ and $C_{\Omega,p,\omega}$ from Proposition \ref{cp asymp} and Theorem \ref{thm log-convex stab}, respectively, we complete the proof.
\end{proof}

\begin{proof}[\textbf{Proof of Corollary \ref{cor 2}}]
By Theorem \ref{thm stability poincare}, we have
\begin{align}\label{proof spec 1}
\int_{\Omega}|\nabla u|^{p}dx-\lambda_{1}(p,\Omega)\int_{\Omega}|u|^{p}dx \geq \frac{1}{2^{p-2}}\left(\frac{\pi_{p}}{\mathrm{diam}(\Omega)}\right)^{p}\inf_{c\in\mathbb{R}}\int_{\Omega}\left|u-cu_{1}\right|^{p}dx.
\end{align}
We then extend the inequality (\ref{proof spec 1}) to $u\in W_{0}^{1,p}(\Omega)$ using the density argument. Now let $u=u_{2}$ be the second normalized eigenfunction of Dirichlet $p$-Laplacian, i.e. $\norm{u_{2}}_{L^{p}(\Omega)}=1$ and
\begin{align*}
\int_{\Omega}|\nabla u_{2}|^{p}dx=\lambda_{2}(p,\Omega)\int_{\Omega}|u_{2}|^{p}dx=\lambda_{2}.
\end{align*}
Then, in (\ref{proof spec 1}), we get
\begin{align*}
\lambda_{2}(p,\Omega)-\lambda_{1}(p,\Omega) \geq \frac{1}{2^{p-2}}\left(\frac{\pi_{p}}{\mathrm{diam}(\Omega)}\right)^{p}\inf_{c\in\mathbb{R}}\int_{\Omega}\left|u_{2}-cu_{1}\right|^{p}dx,
\end{align*}
which completes the proof.
\end{proof}

\section{Proofs of Theorems \ref{thm1} and \ref{thm stability gauss}: Gaussian probability measure}\label{sec proofs gaussian}

\begin{proof}[\textbf{Proof of Theorem \ref{thm1}}]
From Theorem \ref{thm picone}, we have
\begin{align*}
C_{p}(\xi,\eta)=R_{p}(\xi,\eta)=|\nabla u|^p - |\nabla\phi|^{p-2} \biggl\langle\nabla \left( \frac{|u|^p}{|\phi|^{p-2} \phi} \right) , \nabla \phi \biggr \rangle.
\end{align*}
Multiplying both sides by $\left(2\pi\right)^{-\frac{n}{2}}e^{-\frac{|x|^{2}}{2}}$ and integrating over $\Omega$ give
\begin{align}\label{integrate}
\int_{\Omega}C_{p}(\xi,\eta)d\gamma=\int_{\Omega}|\nabla u|^pd\gamma - \int_{\Omega}|\nabla\phi|^{p-2} \biggl \langle\nabla \left( \frac{|u|^p}{|\phi|^{p-2} \phi} \right) , \nabla \phi \biggr \rangle d\gamma .
\end{align}
To proceed next, we define the vector field
\begin{align*}
X:=\frac{|u|^{p}}{|\phi|^{p-2}\phi}|\nabla \phi|^{p-2}\nabla \phi.
\end{align*}
Applying the product rule for the Gaussian divergence with $V=|\nabla \phi|^{p-2}\nabla \phi$ and $f=\frac{|u|^{p}}{|\phi|^{p-2}\phi}$, we get
\begin{align}\label{div X}
\text{div}_{\gamma}X=\biggl\langle \nabla\left(\frac{|u|^{p}}{|\phi|^{p-2} \phi}\right),  \nabla \phi \biggr \rangle |\nabla \phi|^{p-2}+\frac{|u|^{p}}{|\phi|^{p-2} \phi} \operatorname{div}_{\gamma}\left(|\nabla \phi|^{p-2} \nabla \phi\right).
\end{align}
Note that by definition $\text{div}_{\gamma}\left(|\nabla \phi|^{p-2} \nabla \phi\right)=\Delta_{p,\gamma}\phi$. Rewriting (\ref{div X}):
\begin{align}\label{sub}
|\nabla \phi|^{p-2}\biggl\langle \nabla\left(\frac{|u|^{p}}{|\phi|^{p-2} \phi}\right),  \nabla \phi \biggr \rangle =\text{div}_{\gamma}X-\frac{|u|^{p}}{|\phi|^{p-2} \phi} \Delta_{p,\gamma}\phi.
\end{align}
Substituting (\ref{sub}) into (\ref{integrate}):
\begin{align*}
\int_{\Omega}C_{p}\left(\xi,\eta\right)d\gamma = \int_{\Omega}|\nabla u|^{p}d\gamma - \int_{\Omega}\text{div}_{\gamma}X d\gamma + \int_{\Omega}\frac{|u|^{p}}{|\phi|^{p-2}\phi}\Delta_{p,\gamma}\phi d\gamma.
\end{align*}
After integrating by parts and substituting back $\xi$ and $\eta$, we get 
\begin{align*}
\int_{\Omega}|\nabla u|^{p} d\gamma + \int_{\Omega}\frac{|u|^{p}}{|\phi|^{p-2}\phi}\Delta_{p,\gamma}\phi d\gamma=\int_{\Omega}C_{p}\left(\nabla u, \phi\nabla\left(\frac{u}{\phi}\right)\right)d\gamma.
\end{align*}
Finally, setting $\phi$ to be the first eigenfunction $u^{\gamma}_{1}$ of the Gaussian $p$-Laplacian, we obtain
\begin{align*}
\int_{\Omega}|\nabla u|^{p}d\gamma - \lambda_{1}(p,\Omega,\gamma)\int_{\Omega}|u|^{p}d\gamma = \int_{\Omega}C_{p}\left(\nabla u, u_{1}^{\gamma}\nabla\left(\frac{u}{u_{1}^{\gamma}}\right)\right)d\gamma.
\end{align*}
The proof is complete. 
\end{proof}

\begin{proof}[\textbf{Proof of Theorem \ref{thm stability gauss}}]
Set $\Omega$ to be a bounded convex domain of $\mathbb{R}^n$ ($n \geq 2$). By Theorem \ref{thm1}, we have
\begin{align*}
\int_{\Omega}|\nabla u|^{p}d\gamma-\lambda_{1}(p,\Omega, \gamma)\int_{\Omega}|u|^{p}d\gamma=\int_{\Omega}C_{p}\left(\nabla u, u^{\gamma}_{1}\nabla\left(\frac{u}{u^{\gamma}_{1}}\right)\right)d\gamma
\end{align*}
for all real-valued $u\in C_{0}^{\infty}(\Omega)$. Lemma \ref{lem1} implies that, for $p\geq 2$, we have
\begin{align}\label{gauss pre last}
\int_{\Omega}|\nabla u|^{p}d\gamma-\lambda_{1}(p,\Omega, \gamma)\int_{\Omega}|u|^{p}d\gamma \geq c_{1}(p)\int_{\Omega}\left|\nabla\left(\frac{u}{u_{1}^{\gamma}}\right)\right|^{p}|u^{\gamma}_{1}|^{p}d\gamma.
\end{align}
Now we define
\begin{align*}
f:=\frac{u}{u^{\gamma}_{1}}.
\end{align*}
The regularity properties of $u$ and $u^{\gamma}_{1}$ imply that $f$ is Lipschitz continuous (see, Proposition \ref{prop regularity gaussian}). Since $|u^{\gamma}_{1}|^{p}$ is log-concave by Theorem \ref{thm convex gauss}, we have that 
\begin{align*}
|u^{\gamma}_{1}|^{p}\left(2\pi\right)^{-\frac{n}{2}}e^{-\frac{|x|^{2}}{2}}
\end{align*} 
is also log-concave. Let us denote $\omega=|u^{\gamma}_{1}|^{p}\left(2\pi\right)^{-\frac{n}{2}}e^{-\frac{|x|^{2}}{2}}$. Then, by an analogous argument from the proof of Theorem \ref{thm stability poincare}, we then eventually get from Theorem \ref{thm log-convex stab}:
\begin{align*}
c_{1}(p)\int_{\Omega}|\nabla f|^{p}|u^{\gamma}_{1}|^{p}d\gamma \geq c_{1}(p) C_{\Omega, p, \omega}\inf_{c\in \mathbb{R}}\int_{\Omega}|f-c|^{p}|u^{\gamma}_{1}|^{p}d\gamma
\end{align*}
for $C_{\Omega,p,\omega}>0$. After the change of variables and applying it in combination with the estimate (\ref{gauss pre last}), we get
\begin{align*}
\int_{\Omega}|\nabla u|^{p}d\gamma - \lambda_{1}(p,\Omega, \gamma)\int_{\Omega}|u|^{p}d\gamma \geq c_{1}(p)C_{\Omega, p, \omega}\inf_{c\in \mathbb{R}}\int_{\Omega}|u-cu^{\gamma}_{1}|^{p}d\gamma.
\end{align*}
Applying the lower bounds on the constants $c_{1}(p)$ and $C_{\Omega, p, \omega}$ from Proposition \ref{cp asymp} and Theorem \ref{thm log-convex stab}, respectively, we complete the proof.
\end{proof}

{\bf Acknowledgments.} We would like to thank Vladimir Bobkov for valuable discussions and comments as well as for pointing out an important reference \cite{fleckinger2002improved} in the field.



\bibliographystyle{alpha}
\bibliography{citation}

\end{document}